\documentclass[a4paper,12pt,leqno]{amsart}

\usepackage{latexsym}
\usepackage[all]{xy}

\usepackage{amsmath, amsfonts, amssymb,amscd,graphics,graphicx,color,eucal,setspace,xypic} 
\usepackage{comment}
\usepackage{mathtools}

\usepackage{graphicx}
\usepackage{stmaryrd}
\usepackage{bigdelim, multirow}
\usepackage[all]{xy}
\usepackage{tikz-cd}

\definecolor{gray}{gray}{0.7}
\definecolor{Gray}{gray}{0.3}

\textwidth=16cm
\textheight=23cm
\topmargin=0.5cm
\oddsidemargin=0.0cm
\evensidemargin=0.0cm

\usepackage{amscd}

\numberwithin{equation}{section}

\theoremstyle{break}
 \newtheorem{theorem}{Theorem}[section]
 \newtheorem{proposition}[theorem]{Proposition}
 \newtheorem{corollary}[theorem]{Corollary}
 \newtheorem{lemma}[theorem]{Lemma}

 \theoremstyle{definition}
 
 \newtheorem{remark}[theorem]{Remark}
 \newtheorem{example}[theorem]{Example}

\allowdisplaybreaks[4]

\def\C{\mathbb C}
\def\R{\mathbb R}
\def\Q{\mathbb Q}
\def\Z{\mathbb Z}

\def\g{\mathfrak{g}}
\def\b{\mathfrak{b}}
\def\t{\mathfrak{t}}
\def\u{\mathfrak{u}}
\def\p{\mathfrak{p}}
\def\a{\mathfrak{a}}
\def\Levi{L}

\def\SS{\mathcal{S}}
\def\RR{\mathcal{R}}

\def\xx{\mathsf{x}}
\def\yy{\mathsf{y}}
\def\ss{\mathsf{s}}
\def\nn{\mathsf{n}}
\def\vv{\mathsf{v}}
\def\ee{\mathsf{e}}
\def\PP{\mathsf{P}}

\DeclareMathOperator{\reg}{r}
\DeclareMathOperator{\regsemi}{rs}
\DeclareMathOperator{\pt}{pt}
\DeclareMathOperator{\rank}{rank}
\DeclareMathOperator{\spanned}{span}
\DeclareMathOperator{\cone}{cone}
\DeclareMathOperator{\conv}{conv}
\DeclareMathOperator{\Map}{Map}

\DeclareMathOperator{\Sym}{Sym}

\DeclareMathOperator{\Poin}{Poin}
\DeclareMathOperator{\Ad}{Ad}

\DeclareMathOperator{\SL}{SL}
\DeclareMathOperator{\SU}{SU}

\DeclareMathOperator{\Hess}{Hess}
\DeclareMathOperator{\Pet}{Pet}
\DeclareMathOperator{\Flag}{Fl}


\begin{document}
\title[Partial Hessenberg varieties]{Notes on the cohomology of partial Hessenberg varieties}
\author[T. Horiguchi]{Tatsuya Horiguchi}
\address{National Institute of Technology, Akashi College, 679-3, Nishioka, Uozumi-cho, Akashi, Hyogo, 674-8501, Japan}
\email{tatsuya.horiguchi0103@gmail.com}

\author[M. Masuda]{Mikiya Masuda}
\address{Osaka Central Advanced Mathematical Institute, Osaka Metropolitan University,
Sugimoto, Sumiyoshi-ku, Osaka, 558-8585, Japan}
\email{mikiyamsd@gmail.com}

\author[T. Sato]{Takashi Sato}
\address{Osaka Central Advanced Mathematical Institute, Osaka Metropolitan University,
Sugimoto, Sumiyoshi-ku, Osaka, 558-8585, Japan}
\email{00tkshst00@gmail.com}

\author[H. Zeng]{Haozhi Zeng}
\address{School of Mathematics and Statistics, Huazhong University of Science and Technology, Wuhan, 430074, P.R. China}
\email{zenghaozhi@icloud.com}

\subjclass[2020]{Primary 14M15}

\keywords{Hessenberg varieties, partial flag varieties.}

\begin{abstract}
Hessenberg varieties are a family of subvarieties of full flag varieties. This family contains well-known varieties such as Springer fibers, Peterson varieties, and permutohedral varieties. It was introduced by De\! Mari-Procesi-Shayman in 1992 and has been actively studied in this decade. In particular, unexpected relations to hyperplane arrangements and the Stanley-Stembridge conjecture in graph theory have been discovered. Hessenberg varieties can be defined in partial flag varieties. In this paper, we study their cohomology by relating them to the cohomology of Hessenberg varieties in the full flag varieties. 
\end{abstract}

\maketitle

\setcounter{tocdepth}{1}

\tableofcontents

\section{Introduction}
\label{sect:Intro}

Let $G$ be a simply connected semisimple linear algebraic group over $\C$ of rank $n$. We choose and fix a Borel subgroup $B$ of $G$ and a maximal torus $T$ in $B$, so the set of simple roots $\Delta$ is determined. A $B$-submodule $H$ of the Lie algebra $\g$ of $G$ (under the adjoint action) is called a Hessenberg space if it contains the Lie algebra $\b$ of $B$. 
Then, for each $\xx\in \g$, the Hessenberg variety $\Hess(\xx,H)$ is defined in \cite{dMPS} by 
\[
\Hess(\xx,H):=\{gB\in G/B\mid \Ad(g^{-1})(\xx)\in H\}. 
\] 
The family of Hessenberg varieties contains Springer fibers in geometric representation theory, Peterson varieties related to the quantum cohomology of the flag variety $G/B$, and varieties related to QR algorithm to find eigenvalues of matrices. Here the family of the varieties related to QR algorithm  contains permutohedral varieties which are toric varieties.  
Recently, unexpected relations to hyperplane arrangements (\cite{AHMMS}) and the Stanley-Stembridge conjecture in graph theory (\cite{BroCho})\footnote{The Stanley-Stembridge conjecture was recently solved by Hikita \cite{Hik} through purely combinatorial argument.} have been discovered. 

One can define what is called a \emph{partial} Hessenberg variety in a partial flag variety $G/P$ where $P$ is a parabolic subgroup of $G$ containing $B$. In this case we require that $H$ be a $P$-submodule of $\g$ containing the Lie algebra $\p$ of $P$. We call such $H$ a $\p$-Hessenberg space. A $\b$-Hessenberg space is a Hessenberg space mentioned above. For a $\p$-Hessenberg space $H$, we define 
\[
\Hess_{\Theta}(\xx,H):=\{ gP\in G/P\mid \Ad(g^{-1})(\xx)\in H\},
\]
where $\Theta$ denotes a subset of the simple roots $\Delta$ corresponding to $P$. When $P=B$, $\Theta$ is the empty set. 

In this paper, we study the cohomology of $\Hess_{\Theta}(\xx,H)$ by relating it to the cohomology of $\Hess(\xx,H)$. This is done in \cite{Hor24} when $\xx$ is regular nilpotent.   
A key observation is that the projection $G/B\to G/P$ induces a fibration 
\begin{equation} \label{eq:intro_fibration}
P/B\to \Hess(\xx,H)\xrightarrow{\pi_H} \Hess_{\Theta}(\xx,H), 
\end{equation}
and the group $W_\Theta$ associated to $\Theta$ acts on these spaces, making this fibration $W_\Theta$-equivariant, where the action of $W_\Theta$ on $\Hess_{\Theta}(\xx,H)$ is trivial. In fact, 
the $W_\Theta$-action on $\Hess(\xx,H)$ is the restriction of the natural right action of the Weyl group $W$ on $G/B$. It is not algebraic since it is defined through the natural identification of $G/B$ with $K/(K\cap T)$ where $K$ is a maximal compact Lie subgroup of $G$. The induced $W_\Theta$-action on $H^*(\Hess(\xx,H))$ is called the \emph{star action}. Throughout this paper, all cohomology rings will be taken with $\Q$-coefficients unless otherwise stated. Our main result is the following. 

\begin{theorem}[Theorem~\ref{theorem:main}] \label{theo:intro_main}
For any $\xx\in \g$ and any $\p$-Hessenberg space $H$, the following hold: 
\begin{enumerate}
\item $\pi_H^*\colon H^*(\Hess_{\Theta}(\xx,H))\to H^*(\Hess(\xx,H))$ is injective.
\item $H^*(\Hess(\xx,H))\cong H^*(P/B)\otimes_\Q H^*(\Hess_{\Theta}(\xx,H))$ as graded $W_\Theta$-modules.
\item $H^*(\Hess_\Theta(\xx,H))\cong H^*(\Hess(\xx,H))^{W_\Theta({\rm star})}$ as graded $\Q$-algebras. 
\end{enumerate}
Here $H^*(\Hess(\xx,H))^{W_\Theta({\rm star})}$ denotes the invariants in $H^*(\Hess(\xx,H))$ under the star action of $W_\Theta$. 
\end{theorem}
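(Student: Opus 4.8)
The plan is to analyze the $W_\Theta$-equivariant fibration $P/B \to \Hess(\xx,H) \xrightarrow{\pi_H} \Hess_\Theta(\xx,H)$ from \eqref{eq:intro_fibration} via the Leray--Hirsch theorem. First I would establish that the fibration is cohomologically trivial, i.e.\ that $H^*(\Hess(\xx,H))$ is a free module over $H^*(\Hess_\Theta(\xx,H))$ with basis pulled back from $H^*(P/B)$. The classical flag bundle $G/B \to G/P$ has fiber $P/B$, which is itself a flag variety for the Levi factor, and its cohomology classes are restrictions of line bundle classes (Chern classes of the tautological quotients), which exist globally on $G/B$. Restricting these global classes to $\Hess(\xx,H) \subseteq G/B$ gives candidate Leray--Hirsch classes; the main point is to verify that they restrict to a basis of $H^*(P/B)$ on each fiber. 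This should follow because the fiber inclusion $P/B \hookrightarrow \Hess(\xx,H)$ factors through $P/B \hookrightarrow G/B$, and on $G/B$ the relevant classes are known to restrict to a basis of $H^*(P/B)$ (Borel's description of $H^*(G/B)$ as a free module over $H^*(G/P)$). Granting this, Leray--Hirsch gives part~(2) as an isomorphism of $H^*(\Hess_\Theta(\xx,H))$-modules, hence in particular a vector space isomorphism $H^*(\Hess(\xx,H)) \cong H^*(P/B) \otimes_\Q H^*(\Hess_\Theta(\xx,H))$; part~(1) is then immediate since $\pi_H^*$ is a split injection (the Leray--Hirsch basis includes $1 \in H^*(P/B)$, so the summand $1 \otimes H^*(\Hess_\Theta(\xx,H))$ is exactly the image of $\pi_H^*$).

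Next I would upgrade the module isomorphism in (2) to an isomorphism of graded $W_\Theta$-modules. Since the fibration is $W_\Theta$-equivariant with $W_\Theta$ acting trivially on the base $\Hess_\Theta(\xx,H)$, the map $\pi_H^*$ is $W_\Theta$-equivariant for the trivial action on the source, so its image is contained in the invariants $H^*(\Hess(\xx,H))^{W_\Theta(\mathrm{star})}$. To get the $W_\Theta$-module splitting I would want the star action to respect the Leray--Hirsch decomposition; the cleanest route is to observe that the star action on $H^*(\Hess(\xx,H))$ is the restriction of the (right) $W$-action on $H^*(G/B) \cong K/(K\cap T)$, and that on $G/B$ this action realizes the Leray--Hirsch decomposition $H^*(G/B) \cong H^*(P/B) \otimes H^*(G/P)$ as $W_\Theta$-modules, with $W_\Theta$ acting on the first factor as on $H^*(P/B)$ (via the Levi's Weyl group) and trivially on the second. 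Restricting this structure to the subvariety $\Hess(\xx,H)$ and using naturality of Leray--Hirsch classes (they are restrictions of global classes on $G/B$) transports the $W_\Theta$-module structure, giving (2).

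Finally, part~(3): I would take $W_\Theta$-invariants on both sides of the isomorphism in (2). Since $W_\Theta$ acts trivially on $H^*(\Hess_\Theta(\xx,H))$ and $H^*(P/B)^{W_\Theta} = \Q$ (the flag variety $P/B$ of a semisimple group has trivial $W_\Theta$-invariant cohomology in positive degrees under the Weyl group action — this is the standard fact that $H^*(G/B)^W = \Q$, applied to the Levi), we get $H^*(\Hess(\xx,H))^{W_\Theta(\mathrm{star})} \cong \Q \otimes_\Q H^*(\Hess_\Theta(\xx,H)) = H^*(\Hess_\Theta(\xx,H))$ as graded vector spaces. To promote this to a ring isomorphism, I would note that this vector space isomorphism is realized by $\pi_H^*$ itself: by part~(1), $\pi_H^*$ is an injective ring homomorphism landing in the invariants, and the dimension count just obtained shows it is surjective onto $H^*(\Hess(\xx,H))^{W_\Theta(\mathrm{star})}$ in each degree, hence an isomorphism of graded $\Q$-algebras.

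\medskip

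I expect the main obstacle to be establishing the Leray--Hirsch property in a way that simultaneously tracks the non-algebraic star action. Leray--Hirsch for the cohomological triviality of the flag bundle is routine, but the star action is defined only through the homeomorphism $G/B \cong K/(K\cap T)$ and is not induced by algebraic automorphisms of $\Hess(\xx,H)$, so one cannot simply invoke functoriality of algebraic constructions. The care needed is to check that the chosen Leray--Hirsch classes — restrictions to $\Hess(\xx,H)$ of the Borel line-bundle classes on $G/B$ — span a $W_\Theta$-stable complement to $\pi_H^* H^*(\Hess_\Theta(\xx,H))$, which amounts to understanding the star action on these particular classes. This is exactly where one uses that the star action on $H^*(\Hess(\xx,H))$ is compatible with the $W$-action on $H^*(G/B)$ via restriction along $\Hess(\xx,H) \hookrightarrow G/B$, together with the well-understood decomposition of $H^*(G/B)$ as a $W_\Theta$-module.
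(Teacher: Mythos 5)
Your proposal is correct and follows essentially the same route as the paper: Leray--Hirsch for the fibration $P/B\hookrightarrow \Hess(\xx,H)\xrightarrow{\pi_H}\Hess_\Theta(\xx,H)$ using classes restricted from $H^*(G/B)$, a $W_\Theta$-equivariant splitting of $\iota^*\colon H^*(G/B)\to H^*(P/B)$ (which the paper makes explicit via Borel's presentation and the section $\overline{\alpha_i}\mapsto\alpha_i$ in Lemma~\ref{lemma:section_equivariant}), and then taking $W_\Theta$-invariants with $H^*(P/B)^{W_\Theta}=\Q$. You also correctly isolate the one genuinely delicate point, namely that the Leray--Hirsch classes must be chosen to span a $W_\Theta$-stable complement compatible with the non-algebraic star action.
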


An element $\ss \in \g$ is \emph{regular semisimple} if its centralizer in $G$ is a maximal torus of $G$.
By the adjoint action on $\g$, we may assume that the maximal torus is the $T$ we chose at the beginning. Then $\Hess(\ss,H)$ is invariant under the natural left $T$-action on $G/B$. It is known that $\Hess(\ss,H)$ with this $T$-action is a GKM manifold and $H^*(\Hess(\ss,H))$ becomes a $W$-module through the GKM theory in \cite{GKM}. The $W$-action on $H^*(\Hess(\ss,H))$ was introduced by Tymoczko in \cite{Tym08} and called the \emph{dot action}. The star action of $W_\Theta$ and the dot action of $W$ on $H^*(\Hess(\ss,H))$ commute with each other. 

\begin{proposition}[Proposition~\ref{proposition:dotaction starinvariants}] \label{prop:intro_prop}
When $H$ is a $\p$-Hessenberg space, we have an isomorphism 
\[
H^*(\Hess(\ss,H))\cong H^*(\Hess(\ss,H))^{W_\Theta({\rm star})}\otimes_\Q H^*(P/B)\quad \text{as graded $W$-modules}
\]
with respect to the dot action, where the $W$-action on $H^*(P/B)$ is trivial. 
\end{proposition}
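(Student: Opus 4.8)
The plan is to realize $M:=H^{*}(\Hess(\ss,H))$ as a free module over its subalgebra $A:=H^{*}(\Hess(\ss,H))^{W_\Theta({\rm star})}$, to observe that the dot action of $W$ acts on $M$ by graded algebra automorphisms preserving $A$, and then to show that the induced dot action on the ``fibre part'' $M/A_+M$ is trivial; an averaging argument over the dot action then upgrades the Leray--Hirsch isomorphism to a $W$-equivariant one. I will use Theorem~\ref{theorem:main} throughout.

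First, since the fibration \eqref{eq:intro_fibration} is $W_\Theta$-equivariant with $W_\Theta$ acting trivially on $\Hess_\Theta(\ss,H)$, the ring map $\pi_H^{*}$ has image contained in $A$, and by Theorem~\ref{theorem:main}(1),(3) it identifies $H^{*}(\Hess_\Theta(\ss,H))$ with $A$ as graded algebras. Since all spaces in sight have cohomology concentrated in even degrees, Theorem~\ref{theorem:main}(1),(2) places us in the setting of the Leray--Hirsch theorem for \eqref{eq:intro_fibration}: $M$ is a free $A$-module, and restriction to a fibre induces a graded ring isomorphism $M/A_+M\cong H^{*}(P/B)$, where $A_+\subseteq A$ is the ideal of positive-degree elements and $A_+M$ the ideal it generates in $M$. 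Because $\ss$ is regular semisimple, $\Hess(\ss,H)$ is a GKM manifold, and on the GKM presentation the dot action is built from the permutation action of $W$ on the $T$-fixed points together with the natural $W$-action on $H^{*}_{T}(\pt)=\Q[\t^{*}]$; in particular it is multiplicative and $\Q[\t^{*}]$-semilinear, hence descends to an action of $W$ by graded $\Q$-algebra automorphisms of $M=H^{*}_{T}(\Hess(\ss,H))\otimes_{\Q[\t^{*}]}\Q$. As the dot and star actions commute, the dot action preserves $A$, hence $A_+M$, hence induces an action by graded $\Q$-algebra automorphisms on $M/A_+M\cong H^{*}(P/B)$.

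The crucial point --- and the only nontrivial one --- is that this induced dot action on $M/A_+M$ is trivial. Consider the inclusion $\iota\colon\Hess(\ss,H)\into\Hess(\ss,\g)=G/B$. It is $T$-equivariant, is the identity on $T$-fixed point sets (both equal $W$), and the dot actions on $H^{*}_{T}(G/B)$ and $H^{*}_{T}(\Hess(\ss,H))$ are given by the same formula on functions on $W$; hence $\iota^{*}\colon H^{*}(G/B)\to M$ intertwines the dot actions. Since the dot action on $H^{*}(\Hess(\ss,\g))=H^{*}(G/B)$ is trivial, $\operatorname{im}(\iota^{*})$ consists of dot-invariant classes. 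On the other hand, because $H$ is a $\p$-Hessenberg space, each fibre of $\pi_H$, viewed in $G/B$, is a $G$-translate of the standard embedding $P/B\into G/B$, so (as $G$ is connected) the composite $H^{*}(G/B)\xrightarrow{\iota^{*}}M\onto M/A_+M\cong H^{*}(P/B)$ is the usual, surjective, restriction map of the flag variety. Therefore $M/A_+M$ is the image of a surjective morphism of dot-$W$-modules whose source carries the trivial action, so $W$ acts trivially on $M/A_+M\cong H^{*}(P/B)$.

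To finish, fix a homogeneous $\Q$-basis of $M/A_+M$, lift it to homogeneous elements of $M$, and average each lift over the dot action of $W$; this produces homogeneous dot-invariant elements $n_1,\dots,n_m\in M$ still mapping to a basis of $M/A_+M$. Put $N:=\spanned_{\Q}\{n_1,\dots,n_m\}$, a graded subspace of $M$ with trivial dot action. The multiplication map $A\otimes_{\Q}N\to M$ is a morphism of graded $A$-modules, and it is dot-equivariant because the dot action is multiplicative, preserves $A$, and fixes each $n_i$; it is surjective by Nakayama's lemma, and since both sides have the same Hilbert series (Leray--Hirsch) it is an isomorphism. As $N\cong M/A_+M\cong H^{*}(P/B)$ as graded vector spaces with trivial dot action, we obtain $H^{*}(\Hess(\ss,H))\cong H^{*}(\Hess(\ss,H))^{W_\Theta({\rm star})}\otimes_{\Q}H^{*}(P/B)$ as graded $W$-modules with respect to the dot action, the action on the second factor being trivial. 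The main obstacle is the triviality asserted in the previous paragraph; once that and the multiplicativity of the dot action on regular semisimple Hessenberg varieties are granted, the rest is formal.
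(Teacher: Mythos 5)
Your argument is correct, and it reaches the conclusion by a genuinely different mechanism than the paper, although both hinge on the same two key facts: the dot action on $H^*(G/B)$ is trivial, and $j_H^*\colon H^*(G/B)\to H^*(\Hess(\ss,H))$ is dot-equivariant (both maps being the identity on functions on $W$ in the GKM picture). The paper does not need your averaging/Nakayama step: it already has, from the proof of Theorem~\ref{theorem:main}, the explicit Leray--Hirsch isomorphism $\varphi(\alpha\otimes\beta)=s_H(\alpha)\cdot\pi_H^*(\beta)$ with $s_H=j_H^*\circ s$ for the $W_\Theta$-equivariant section $s$ of Lemma~\ref{lemma:section_equivariant}; since $s_H$ factors through the dot-trivial module $H^*(G/B)$, its image is automatically dot-invariant, so the \emph{same} map $\varphi$ is simultaneously star-$W_\Theta$-equivariant and dot-$W$-equivariant, and taking star-invariants finishes the proof. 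You instead rebuild the fibre classes abstractly: you observe the dot action descends to $M/A_+M\cong H^*(P/B)$, prove it is trivial there by exhibiting $M/A_+M$ as a dot-equivariant quotient of $H^*(G/B)$ (via the identity $\iota_H^*\circ j_H^*=\iota^*$ from the paper's commutative diagram), and then average lifts to get a dot-invariant complement $N$. What your route buys is independence from the particular section: any Leray--Hirsch basis can be corrected to a dot-invariant one. What it gives up is the compatibility of the two structures in a single map --- your averaged classes $n_i$ need not span a star-$W_\Theta$-stable subspace, so your isomorphism is a priori only dot-equivariant, whereas the paper's $\varphi$ carries both actions at once (which is what is actually needed later, in the proof of Theorem~\ref{theorem:main2}, to pass dot-invariants through the identification $H^*(\Hess_\Theta(\ss,H))\cong H^*(\Hess(\ss,H))^{W_\Theta({\rm star})}$). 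For the proposition as stated, your proof is complete.
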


An element $\yy\in\g$ is \emph{regular} if its centralizer in $G$ has dimension $n$. We may assume that $\yy$ is associated to a subset $\Xi$ of $\Delta$ similarly to the regular semisimple case, and we have a subgroup $W_\Xi$ of $W$ associated to $\Xi$. Combining the observation above with the argument in \cite{BaCr24}, we obtain the following. 

\begin{theorem}[Theorem~\ref{theorem:main2}] \label{theo:intro_main2}
When $H$ is a $\p$-Hessenberg space, we have an isomorphism 
\[
H^*(\Hess_\Theta(\yy,H))\cong H^*(\Hess_\Theta(\ss,H))^{W_\Xi({\rm dot})}\quad\text{as graded $\Q$-algebras},
\]
where the right hand side denotes the invariants in $H^*(\Hess_\Theta(\ss,H))$ under the dot action of $W_\Xi$.
\end{theorem}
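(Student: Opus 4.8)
The plan is to deduce the statement from its full flag analogue --- the isomorphism $H^*(\Hess(\yy,H))\cong H^*(\Hess(\ss,H))^{W_\Xi({\rm dot})}$, which follows from the argument of \cite{BaCr24} applied to $G/B$ --- by transporting it through the fibration $\pi_H$ using Theorem~\ref{theorem:main} and Proposition~\ref{proposition:dotaction starinvariants}. The guiding observation is that $H^*(\Hess(\ss,H))$ carries two commuting group actions: the star action of $W_\Theta$ and the dot action of $W_\Xi$. Taking $W_\Theta$-star invariants produces $H^*(\Hess_\Theta(\ss,H))$ by Theorem~\ref{theorem:main}(3), while taking $W_\Xi$-dot invariants produces $H^*(\Hess(\yy,H))$ by the full flag case; since the two actions commute, performing both operations in either order gives the same algebra, and comparing the two orders is precisely the statement to be proved.

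Concretely, I would proceed as follows. By Theorem~\ref{theorem:main}(3) with $\xx=\yy$ we have $H^*(\Hess_\Theta(\yy,H))\cong H^*(\Hess(\yy,H))^{W_\Theta({\rm star})}$ as graded $\Q$-algebras. The full flag isomorphism $H^*(\Hess(\yy,H))\cong H^*(\Hess(\ss,H))^{W_\Xi({\rm dot})}$ coming from \cite{BaCr24} is equivariant for the star action of $W_\Theta$ (see below), so taking $W_\Theta$-star invariants yields
\[
H^*(\Hess_\Theta(\yy,H))\;\cong\;\bigl(H^*(\Hess(\ss,H))^{W_\Xi({\rm dot})}\bigr)^{W_\Theta({\rm star})}\;=\;\bigl(H^*(\Hess(\ss,H))^{W_\Theta({\rm star})}\bigr)^{W_\Xi({\rm dot})},
\]
where the last equality uses that the star and dot actions on $H^*(\Hess(\ss,H))$ commute. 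Now Theorem~\ref{theorem:main}(3) with $\xx=\ss$ identifies $H^*(\Hess(\ss,H))^{W_\Theta({\rm star})}$ with $H^*(\Hess_\Theta(\ss,H))$ as graded algebras, and Proposition~\ref{proposition:dotaction starinvariants} shows that this identification intertwines the dot action of $W$, in particular of $W_\Xi$, on the two sides: indeed Proposition~\ref{proposition:dotaction starinvariants} realizes the dot $W$-module $H^*(\Hess(\ss,H))$ as $H^*(\Hess(\ss,H))^{W_\Theta({\rm star})}\otimes H^*(P/B)$ with trivial action on $H^*(P/B)$, so $W_\Xi$-dot invariants are computed on the star-invariant tensor factor. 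Taking $W_\Xi$-dot invariants therefore turns the right-hand side above into $H^*(\Hess_\Theta(\ss,H))^{W_\Xi({\rm dot})}$, completing the chain.

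The step I expect to be the main obstacle is the assertion that the full flag isomorphism of \cite{BaCr24} is $W_\Theta$-star equivariant. Since the star action is not algebraic --- it comes from the natural right $W$-action on $G/B\cong K/(K\cap T)$ restricted to $\Hess(\ss,H)$ and $\Hess(\yy,H)$ --- one cannot simply invoke naturality of an algebraic construction. However, because $H$ is a $\p$-Hessenberg space, every Hessenberg variety occurring in the family used in \cite{BaCr24} is preserved by the right $W_\Theta$-action (the fibration $\pi_H$ is $W_\Theta$-equivariant with trivial action on the base, so $W_\Theta$ permutes the fibers of $\pi_H$), and this right action commutes with the left $G$-action governing the family and its degeneration; hence the entire construction can be carried out $W_\Theta$-equivariantly. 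Alternatively, and perhaps more cleanly, one can run the argument of \cite{BaCr24} directly for the partial Hessenberg varieties, using Theorem~\ref{theorem:main}(2) (valid for every $\xx\in\g$) to control $H^*(\Hess(\xx,H))$ throughout the family and thereby to describe the cohomology of the partial members in terms of that of the full ones; in that approach the monodromy giving the dot action of $W_\Xi$ on $H^*(\Hess_\Theta(\ss,H))$ is automatically the one compatible with $\pi_H^*$, which is exactly what the comparison needs. Either way, the remaining steps are formal manipulations with the two commuting group actions and the identifications furnished by Theorem~\ref{theorem:main} and Proposition~\ref{proposition:dotaction starinvariants}.
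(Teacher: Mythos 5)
Your proposal is correct and follows essentially the same route as the paper: both apply Theorem~\ref{theorem:main}(3) at $\xx=\yy$ and at $\xx=\ss$, establish that the isomorphism $\eta_\yy$ of \cite{BaCr24} is $W_\Theta$-star equivariant by observing that the right $W_\Theta$-action on $G\times_B H$ (defined via $K\times_{T_K}H$) preserves every fiber $\mu_H^{-1}(\a)$ so that all inclusions in the construction are equivariant, and then exchange the order of taking $W_\Theta$-star and $W_\Xi$-dot invariants using Lemma~\ref{lemma:stardotcommute} together with the dot-equivariance of the identification $H^*(\Hess_\Theta(\ss,H))\cong H^*(\Hess(\ss,H))^{W_\Theta({\rm star})}$ from Proposition~\ref{proposition:dotaction starinvariants}. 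The only difference is presentational: you correctly flag the $W_\Theta$-equivariance of $\eta_\yy$ as the crux, and the paper makes your sketch precise by writing the action as $[k,\vv]\cdot w=[kz,\Ad(z^{-1})(\vv)]$.
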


The paper is organized as follows. We observe the fibration \eqref{eq:intro_fibration} in Section~\ref{sect:partial Hessenberg varieties} and prove Theorem~\ref{theo:intro_main} in Section~\ref{sect:cohomology} by applying the Leray-Hirsch theorem. For that, we show the existence of a $W_\Theta$-equivariant section to the restriction map $H^*(\Hess(\xx,H))\to H^*(P/B)$. In Section~\ref{sect:equivariant cohomology of regular semisimple} we treat the regular semisimple case. We review the dot action and prove Proposition~\ref{prop:intro_prop}. In Section~\ref{sect:cohomology of regular} we treat the regular case and prove Theorem~\ref{theo:intro_main2}. We also remark that $H^*(\Hess(\yy,H))$ for a regular element $\yy\in \g$ is a Poincar\'e duality algebra, has Hard Lefschetz property, and satisfies Hodge-Riemann relations (Corollary~\ref{coro:regular}). A regular semisimple Hessenberg variety of double lollipop type (\cite{MasSat}) has a structure of a fiber bundle where the fiber is a product of two full flag varieties and the base is a \emph{toric} regular semisimple Hessenberg variety in a partial flag variety.  In Section 6, we give a formula of its Poincar\'e polynomial.


\bigskip
\noindent \textbf{Acknowledgements.} 
The first author is supported in part by JSPS KAKENHI Grant-in-Aid for Early-Career Scientists: 23K12981.
The second author is supported in part by JSPS KAKENHI Grant-in-Aid for Scientific Research (C): 25K07007.
The third author is supported in part by JSPS KAKENHI Grant-in-Aid for Scientific Research (C): 25K00205.
The fourth author is supported in part by NSFC: 11901218.

\section{Partial Hessenberg varieties} \label{sect:partial Hessenberg varieties}

Let $G$ be a simply connected semisimple linear algebraic group over $\C$ of rank $n$.
Fix a Borel subgroup $B$ of $G$ and let $T$ be a maximal torus of $G$ in $B$. 
We write $\t \subset \b \subset \g$ for the Lie algebras of $T \subset B \subset G$. 
Let $\Phi$ be the root system of $\t$ in $\g$. 
We denote by $\Phi^+$ the set of positive roots in $\Phi$, i.e.\ the set of roots of $\t$ in the Lie algebra $\u$ of the unipotent radical of $B$.
We also denote by $\Phi^-$ the set of negative roots, i.e.\ $\Phi^- = \{-\alpha \in \Phi \mid \alpha \in \Phi^+ \}$. 
Let $\Delta =\{\alpha_1,\ldots, \alpha_n \}$ be the set of simple roots.
The root space associated to a root $\alpha$ is denoted by $\g_\alpha$ and we fix a basis $E_\alpha$ of $\g_\alpha$.
Let $P$ be a parabolic subgroup of $G$ containing $B$. 
Recall that $P$ is written as a semidirect product $U_\Theta \rtimes \Levi_\Theta$ for some $\Theta \subset \Delta$ where $U_\Theta$ is the unipotent radical of $P$ and $\Levi_\Theta$ is defined as follows. 
Let $\Phi_\Theta^+$ be the set of positive roots which are linear combinations of roots in $\Theta$. 
We denote by $\Phi_\Theta$ the root system associated to $\Theta$, i.e.\ $\Phi_\Theta=\Phi_\Theta^+ \cup \Phi_\Theta^-$ where $\Phi^-_\Theta=\{-\alpha \in \Phi \mid \alpha \in \Phi^+_\Theta \}$.
Then $\Levi_\Theta$ is the subgroup of $G$ generated by $T$ together with the root subgroups $U_\alpha =\{\exp(\lambda E_\alpha) \in G \mid \lambda \in \C \}$ for $\alpha \in \Phi_\Theta$.
Throughout the article, we fix $\Theta \subset \Delta$ and $P$ is assumed to be the parabolic subgroup $U_\Theta \rtimes \Levi_\Theta$ associated to $\Theta$. 
Let $W = N_G(T)/T$ be the Weyl group of $G$ where $N_G(T)$ is the normalizer of $T$ in $G$. 
We denote by $s_\alpha$ the reflection associated to a root $\alpha \in \Phi$. 
In particular, we write $s_i = s_{\alpha_i}$ for the simple reflection. 
Note that $W$ is generated by the simple reflections $s_1, \ldots, s_n$.
Let $W_\Theta$ be the subgroup of $W$ generated by simple reflections $s_i$ associated to $\alpha_i \in \Theta$.
We denote the Lie algebra of $P$ by $\p$. 

A subspace $H \subset \g$ is called a \emph{$\p$-Hessenberg space} if $H$ is $\p$-submodule and $H$ contains $\p$. 
We simply call a $\b$-Hessenberg space a \emph{Hessenberg space}. 
Note that a $\p$-Hessenberg space $H$ can be written as 
\begin{align} \label{eq:pHessenberg space}
H = \b \oplus \bigoplus_{\alpha \in I(H)} \g_{-\alpha} 
\end{align}
for some subset $I(H) \subset \Phi^+$. 

\begin{remark}
There is a notion introduced in \cite[Definition~2.3]{Hor24} that a subset of $\Phi^+$ is a \emph{$\Theta$-ideal}. 
The subset $I(H) \subset \Phi^+$ described in \eqref{eq:pHessenberg space} is a $\Theta$-ideal. 
Moreover, the correspondence sending $H$ to $I(H)$ gives a one-to-one correspondence between the set of $\p$-Hessenberg spaces and the set of $\Theta$-ideals (\cite[Lemma~2.5]{Hor24}).
Note that the action of $W_\Theta$ on $\Phi$ preserves $I(H) \setminus \Phi_\Theta^+$ (\cite[Lemma~6.2]{Hor24}).
\end{remark}

The \emph{partial Hessenberg variety} $\Hess_\Theta(\xx,H)$ associated to $\xx \in \g$ and $\p$-Hessenberg space $H$ is defined to be the following subvariety of the partial flag variety $G/P$:
\begin{align*}
\Hess_\Theta(\xx,H) \coloneqq \{gP \in G/P \mid \Ad(g^{-1})(\xx) \in H \}.
\end{align*}
Note that the natural right action of $P$ on $G$ preserves a subset $G_H(\xx) = \{g \in G \mid \Ad(g^{-1})(\xx) \in H \}$ since $H$ is a $\p$-Hessenberg space, so $\Hess_\Theta(\xx,H)$ is identified with the quotient space $G_H(\xx)/P$. 
If $P=B$, then $\Hess_\Theta(\xx,H)$ is denoted by $\Hess(\xx,H)$ which is also simply called a \emph{Hessenberg variety}.
For $\xx, \xx' \in \g$ with $\xx'=\Ad(g')(\xx)$ for some $g' \in G$, we obtain the isomorphism $\Hess_\Theta(\xx,H) \cong \Hess_\Theta(\xx',H)$ which sends $gP$ to $g'gP$.
Recall that the natural projection $\pi: G/B \to G/P$ is a fiber bundle with fiber $P/B$. 
By restricting the projection $\pi$ to a Hessenberg variety, one can see that the natural projection $\pi_H: G_H(\xx)/B \to G_H(\xx)/P$ is also a fiber bundle with fiber $P/B$.
We record this fact in the following lemma.

\begin{lemma} \label{lemma:fiber bundle}
Let $\xx \in \g$ and $H$ be a $\p$-Hessenberg space. 
Then the natural projection $\pi_H: \Hess(\xx,H) \to \Hess_\Theta(\xx,H)$ is a fiber bundle with fiber $P/B$. 
\end{lemma}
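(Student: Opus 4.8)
The plan is to exhibit $\pi_H$ as the restriction, over the subvariety $\Hess_\Theta(\xx,H)\subset G/P$, of the bundle projection $\pi\colon G/B\to G/P$, and then to invoke the general principle that the restriction of a locally trivial fiber bundle to a subspace of its base is again a locally trivial fiber bundle with the same fiber.

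The only step that uses the hypothesis on $H$ is the identity $\pi^{-1}\bigl(\Hess_\Theta(\xx,H)\bigr)=\Hess(\xx,H)$. For the inclusion $\pi^{-1}\bigl(\Hess_\Theta(\xx,H)\bigr)\subseteq\Hess(\xx,H)$, I would take $gP\in\Hess_\Theta(\xx,H)$, i.e.\ $\Ad(g^{-1})(\xx)\in H$; then for every $p\in P$ one has $\Ad\bigl((gp)^{-1}\bigr)(\xx)=\Ad(p^{-1})\bigl(\Ad(g^{-1})(\xx)\bigr)\in\Ad(p^{-1})(H)=H$, because a $\p$-Hessenberg space is by definition $\Ad(P)$-stable, so the whole fiber $\pi^{-1}(gP)=\{gpB\mid p\in P\}$ lies in $\Hess(\xx,H)$. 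The reverse inclusion is immediate from the definitions, since $gB\in\Hess(\xx,H)$ means $\Ad(g^{-1})(\xx)\in H$, which is exactly the condition $\pi(gB)=gP\in\Hess_\Theta(\xx,H)$. Equivalently, under the identifications $\Hess(\xx,H)=G_H(\xx)/B$ and $\Hess_\Theta(\xx,H)=G_H(\xx)/P$ already recorded, $\pi_H$ is nothing but the map $G_H(\xx)/B\to G_H(\xx)/P$ induced by $B\subseteq P$.

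Granting this, I would cover $\Hess_\Theta(\xx,H)$ by the traces $U\cap\Hess_\Theta(\xx,H)$ of opens $U\subseteq G/P$ over which $\pi$ is trivial, say $\pi^{-1}(U)\cong U\times(P/B)$ compatibly with the projection to $U$; restricting each trivialization gives $\pi_H^{-1}\bigl(U\cap\Hess_\Theta(\xx,H)\bigr)\cong\bigl(U\cap\Hess_\Theta(\xx,H)\bigr)\times(P/B)$ over $U\cap\Hess_\Theta(\xx,H)$, which shows $\pi_H$ is a fiber bundle with fiber $P/B$. There is no genuine obstacle here: once the preimage is identified correctly, local triviality is inherited for free, and the sole substantive ingredient is the $\Ad(P)$-invariance of $H$ used above (in particular nothing about $\xx$ enters).
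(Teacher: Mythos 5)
Your proposal is correct and follows essentially the same route as the paper, which likewise obtains $\pi_H$ by restricting the bundle $\pi\colon G/B\to G/P$ to $\Hess_\Theta(\xx,H)$ after noting that the right $P$-action preserves $G_H(\xx)=\{g\in G\mid \Ad(g^{-1})(\xx)\in H\}$ because $H$ is a $\p$-Hessenberg space. You simply spell out the key identity $\pi^{-1}\bigl(\Hess_\Theta(\xx,H)\bigr)=\Hess(\xx,H)$ and the inheritance of local trivializations, which the paper leaves implicit.
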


\begin{remark}
A structure of fiber bundles for Hessenberg varieties was discussed in \cite{KiLe}. 
It was also used in \cite{Hor24} to study the cohomology rings of regular nilpotent partial Hessenberg varieties.
\end{remark}

\section{Cohomology rings of partial Hessenberg varieties and Hessenberg varieties} \label{sect:cohomology}

\subsection{Star action} \label{subsection:star action}
We begin with an action of the Weyl group $W$ on the flag variety $G/B$. 
A key point is an identification of $G/B$ with a quotient space of real Lie groups in order to construct a $W$-action on $G/B$.
Let $K \subset G$ be a maximal compact subgroup such that $T_K \coloneqq K \cap T$ is a maximal (compact) torus in $K$. 
The Weyl group $W$ naturally acts on $K/T_K$, which yields a $W$-action on $G/B$ via the homeomorphism $K/T_K \approx G/B$ induced from the inclusion $K \subset G$. 
In fact, let $N_K(T_K)$ be the normalizer of $T_K$ in $K$. 
Then the right action of the Weyl group $W \cong N_K(T_K)/T_K$ on $K/T_K$ is defined by $(kT_K) \cdot (zT_K) = kzT_K$ for $kT_K \in K/T_K$ and $zT_K \in N_K(T_K)/T_K$.
Note that this $W$-action on $G/B$ does not preserve a complex structure.

\begin{example}
The flag variety $\Flag(\C^n)$ in type $A_{n-1}$ is the set of nested linear subspaces $V_\bullet \coloneqq (V_1\subset V_2 \subset \cdots \subset V_n=\C^n)$ of $\C^n$ such that $\dim_\C V_i = i$ for all $1 \leq i \leq n$.
Let $B$ be the set of upper triangular matrices in the special linear group $G=\SL_n(\C)$. 
To each matrix $g \in \SL_n(\C)$ we assign the flag $V_\bullet = (V_i)_{1 \leq i \leq n}$ so that $V_i$ is spanned by the first $i$ column vectors of $g$. 
This correspondence yields an identification $\Flag(\C^n) \cong \SL_n(\C)/B$. 
By a similar argument, $\Flag(\C^n)$ is also identified with the quotient space $\SU(n)/T_K$ where $T_K$ is the set of diagonal matrices in the special unitary group $K=\SU(n)$. 
Recall that the Weyl group $W$ of type $A_{n-1}$ is the symmetric group $\mathfrak{S}_n$ on $n$ letters $\{1,2,\ldots,n\}$.
To see the right action of $\mathfrak{S}_n$ on $\Flag(\C^n)$, we need an identification $\Flag(\C^n) \cong \SU(n)/T_K$. 
In fact, for a flag $V_\bullet = (V_i)_{1 \leq i \leq n} \in \Flag(\C^n)$, we take a matrix $g \in \SL_n(\C)$ such that $V_i$ is spanned by $g_1, \ldots, g_i$ for all $1 \leq i \leq n$ where $g_1, \ldots, g_n$ denotes the column vectors of $g$. 
If we define $V_\bullet \cdot w$ by the flag determined by $V_\bullet \cdot w = (V'_i)_{1 \leq i \leq n}$ such that $V'_i$ is spanned by $g_{w(1)}, \ldots, g_{w(i)}$ for all $1 \leq i \leq n$, then this definition is \emph{not} well-defined. 
However, if we choose a matrix $u$ in the special unitary group $\SU(n)$ instead of $g \in \SL_n(\C)$ and we define $V_\bullet \cdot w$ by the flag determined by $V_\bullet \cdot w = (V'_i)_{1 \leq i \leq n}$ such that $V'_i$ is spanned by $u_{w(1)}, \ldots, u_{w(i)}$ for all $1 \leq i \leq n$, then this definition is well-defined. 
\end{example}

By \cite[Lemma~5.3]{Hor24}, if $H$ is a $\p$-Hessenberg space, then the $W_\Theta$-action on $G/B$, which is the action by the restriction of $W$ to the subgroup $W_\Theta$, preserves a Hessenberg variety $\Hess(\xx,H)$. 
We denote this action by 
\begin{align} \label{eq:right action}
gB \cdot w \in \Hess(\xx,H) \ \ \ \textrm{for all} \ gB \in \Hess(\xx,H) \ \textrm{and} \ w \in W_\Theta.
\end{align}

\begin{lemma} \label{lemma:piHequivariant}
Let $\xx \in \g$ and $H$ be a $\p$-Hessenberg space. 
Let $\pi_H: \Hess(\xx,H) \to \Hess_\Theta(\xx,H)$ be the natural projection.
Then we have $\pi_H(gB \cdot w) = \pi_H(gB)$ for $gB \in \Hess(\xx,H)$ and $w \in W_\Theta$. 
In particular, the action of $W_\Theta$ on $\Hess_\Theta(\xx,H)$ is trivial so that $\pi_H$ is a $W_\Theta$-equivariant map. 
\end{lemma}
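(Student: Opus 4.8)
The plan is to evaluate $\pi_H$ on a point moved by the right $W_\Theta$-action, using a conveniently chosen Weyl-group representative. Recall that $\pi_H$ is the restriction to $\Hess(\xx,H)$ of the natural projection $\pi\colon G/B\to G/P$, $gB\mapsto gP$ (which does land in $\Hess_\Theta(\xx,H)$ by Lemma~\ref{lemma:fiber bundle}), that $W_\Theta$ preserves $\Hess(\xx,H)$ by \cite[Lemma~5.3]{Hor24}, and that the right action is defined through the homeomorphism $K/T_K\approx G/B$, $kT_K\mapsto kB$, by $(kT_K)\cdot(zT_K)=kzT_K$ for $z\in N_K(T_K)$.

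Fix $gB\in\Hess(\xx,H)$ and $w\in W_\Theta$. Using $G=KB$, write $gB=kB$ with $k\in K$; then $gP=kP$ as well. By definition, $gB\cdot w=kzB$ for any representative $z\in N_K(T_K)$ of $w$ under $W\cong N_K(T_K)/T_K$, and this point is independent of the choices of $k$ and $z$. Applying $\pi$ yields $\pi_H(gB\cdot w)=kzP$, whereas $\pi_H(gB)=kP$. So it suffices to choose $z$ inside $P$: if $z\in N_K(T_K)\cap P$ then $kzP=kP$ and $\pi_H(gB\cdot w)=\pi_H(gB)$.

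The one thing to check is that every $w\in W_\Theta$ has a representative in $N_K(T_K)\cap P$. This is where the Levi factor enters: $W_\Theta=N_{\Levi_\Theta}(T)/T$ is generated by the simple reflections $s_i$ with $\alpha_i\in\Theta$, and each such $s_i$ is represented by an element of $N_K(T_K)$ lying in $\Levi_\Theta$ (realize $s_i$ inside the rank-one subgroup generated by $U_{\pm\alpha_i}$ together with its maximal compact subgroup; equivalently, $K\cap\Levi_\Theta$ is a maximal compact subgroup of $\Levi_\Theta$ with $N_{K\cap\Levi_\Theta}(T_K)/T_K=W_\Theta$). A product of such elements gives the desired representative $z\in N_K(T_K)\cap\Levi_\Theta\subseteq N_K(T_K)\cap P$, which completes the proof of the first assertion. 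The ``in particular'' statement is then formal: endow $\Hess_\Theta(\xx,H)$ with the trivial $W_\Theta$-action; the identity just proved reads $\pi_H(gB\cdot w)=\pi_H(gB)=\pi_H(gB)\cdot w$, so $\pi_H$ is $W_\Theta$-equivariant. There is no serious obstacle here — it is bookkeeping between the two models $G/B$ and $K/T_K$ — and the single delicate point is that $w$ must be represented at once by an element of $N_K(T_K)$, to make sense of the non-algebraic right action, and by an element of $P$, so that it disappears after projecting to $G/P$; the Levi-factor argument supplies such a common representative.
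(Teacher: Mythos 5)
Your proof is correct and follows essentially the same route as the paper: compute $\pi_H(gB\cdot w)=kzP$ via the identification $G/B\approx K/T_K$ and then observe that a representative $z$ of $w\in W_\Theta$ lies in $P$, so $kzP=kP$. The only cosmetic difference is in how that last fact is justified --- the paper invokes $P=BW_\Theta B$ to conclude that \emph{any} representative of $w$ lies in $P$, whereas you construct a specific representative in $N_K(T_K)\cap \Levi_\Theta$ via rank-one subgroups; both are valid, and the paper's citation is marginally shorter.
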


\begin{proof}
Let $gB \in \Hess(\xx,H)$ and $w \in W_\Theta$.
We write $gB = kT_K$ under the identification $G/B \approx K/T_K$.
We also denote by $w=zT_K$ where we regard $W_\Theta$ as a subgroup of $N_K(T_K)/T_K$. 
The well-known fact that $P = B W_\Theta B$ yields $z \in P$.
Since $gB \cdot w = (kT_K) \cdot zT_K =kzT_K=kzB$ by definition, we have $\pi_H(gB \cdot w)=kzP=kP=\pi_H(gB)$, as desired.
\end{proof}

For $\xx \in \g$ and a $\p$-Hessenberg space $H \subset \g$, the $W_\Theta$-action on the Hessenberg variety $\Hess(\xx,H)$ yields the $W_\Theta$-action on its cohomology $H^*(\Hess(\xx,H))$. 
In this paper, we call this action the \emph{star action} of $W_\Theta$ on $H^*(\Hess(\xx,H))$, which is denoted by $w \ast \alpha \in H^*(\Hess(\xx,H))$ for $w \in W_\Theta$ and $\alpha \in H^*(\Hess(\xx,H))$.
In particular, if $\Theta=\Delta$ and $\Hess(\xx,H) = G/B$ (for example $\xx=0$ or $H =\g$), then we obtain the star action of $W$ on $H^*(G/B)$. 
We will discuss the star action of $W$ on $H^*(G/B)$ in the next section.

\subsection{Borel's presentation}

Let $\t^*_{\Z}$ be a lattice determined by the character group of $T$ through the differential at the identity element of $T$. 
We may identify $\t^*_{\Z}$ with the character group of $T$ below.
Set $\t^*_{\Q}=\t^*_{\Z} \otimes_{\Z} \Q$ and we denote its symmetric algebra by $\RR = \Sym \t^*_{\Q}$. 
The $W$-action on $\t^*_{\Q}$ naturally extends to a $W$-action on $\RR$. 
In what follows, $(\RR^{W_\Theta}_+)$ denotes the ideal generated by the $W_\Theta$-invariants in $\RR$ with zero constant term. 

For a $B$-module $V$, one can construct a vector bundle $G \times_B V$ over the flag variety $G/B$. 
Here, $G \times_B V$ is the quotient of the direct product $G \times V$ by the left $B$-action given by $b \cdot (g,\vv) = (gb^{-1}, b \cdot \vv)$ for $b \in B, g \in G$, and $\vv \in V$.
We write $[g,\vv]$ for a coset represented by $(g,\vv)$.
We regard the vector bundle $G \times_B V$ as a $K$-vector bundle over $K/T_K$ through the homeomorphism $K/T_K \approx G/B$. 
Then we obtain a homeomorphism 
\begin{align} \label{eq:vector bundle homeo}
K \times_{T_K} V \approx G \times_B V.
\end{align}
Here, any element $(k,\vv) \in K \times V$ determines two cosets in $K \times_{T_K} V$ and $G \times_B V$, respectively. 
Then the homeomorphism in \eqref{eq:vector bundle homeo} is given by an identification of these two cosets (cf. \cite[Proposition~3.2]{Bre72}).

We now define the complex line bundle $L_\chi$ associated to $\chi \in \t^*_\Z$ over $G/B$ as follows. 
Recall that $\chi: T \to \C^*$ extends to a homomorphism $\tilde\chi: B \to \C^*$ since $B = U \rtimes T$. 
Let $\C_{\tilde\chi}$ be the $1$-dimensional $B$-module via $\tilde\chi$ and define $L_\chi = G \times_B \C_{\tilde\chi}$.
An assignment of the first Chern class $c_1(L_\chi)$ of the line bundle $L_\chi$ to each $\chi \in \t^*_\Z$ gives a homomorphism $\RR \rightarrow H^*(G/B)$, which doubles the grading on $\RR$. 
Borel proved in \cite{Bor53} that this homomorphism is surjective and its kernel is $(\RR^{W}_+)$.
In other words, we obtain an isomorphism of graded $\Q$-algebras 
\begin{align} \label{eq:Borel}
H^*(G/B) \cong \RR/(\RR^{W}_+).
\end{align} 
The right action of $w \in W$ on $K/T_K$ gives a homeomorphism $r_w: K/T_K \to K/T_K$.
Let $\C_{\chi}$ be the $1$-dimensional $T$-module via $\chi$. 
Then $L_\chi$ is identified with $K \times_{T_K} \C_\chi$ by \eqref{eq:vector bundle homeo}. 
Since the pullback bundle $r_w^*(K \times_{T_K} \C_\chi)$ is isomorphic to $K \times_{T_K} \C_{w(\chi)}$, we obtain the equality $w \ast c_1(L_\chi) = c_1(L_{w(\chi)})$ in $H^*(G/B)$.
Thus, the correspondence in \eqref{eq:Borel} is also an isomorphism as $W$-modules where the $W$-action on $H^*(G/B)$ is the star action.

Consider the fiber bundle $P/B \xhookrightarrow{\iota} G/B \xrightarrow{\pi} G/P$ and the induced homomorphisms:
\begin{align*}
H^*(G/P) \xrightarrow{\pi^*} H^*(G/B) \xrightarrow{\iota^*} H^*(P/B). 
\end{align*}
It is well-known that the homomorphism $\pi^*$ is injective and its image coincides with $H^*(G/B)^{W_\Theta}$ by \cite[Corollary~5.4 and Theorem~5.5]{BGG}. 
Recall that $P/B$ is isomorphic to the flag variety $\Levi_\Theta/B_\Theta$ for $\Levi_\Theta$ where $B_\Theta = \Levi_\Theta \cap B$.
More precisely, $P/B$ is the image of the closed embedding $\Levi_\Theta/B_\Theta \hookrightarrow G/B$, which is a $W_\Theta$-equivariant map.
Hence, $\iota^*$ is a $W_\Theta$-equivariant surjective map. 
Let $\varpi_1,\ldots,\varpi_n$ be the fundamental weights.
Then one can see that $\varpi_i \in (\RR^{W_\Theta}_+)$ whenever $\alpha_i \notin \Theta$ since $s_j(\varpi_i) = \varpi_i - \delta_{ij} \alpha_j$ where $\delta_{ij}$ is the Kronecker delta. 
Under the Borel's presentation in \eqref{eq:Borel}, the map $\iota^*$ is written as 
\begin{align} \label{eq:iota}
\RR/(\RR^{W}_+) \twoheadrightarrow \RR/(\RR^{W_\Theta}_+) \cong \RR(\Theta)/({\RR(\Theta)}^{W_\Theta}_+); \ \ \ \bar{\alpha} \mapsto \bar{\alpha}
\end{align}
where $\RR(\Theta)$ is the symmetric algebra of the quotient space $\t(\Theta)^*_{\Q} = \t^*_\Q/\spanned_\Q\{\varpi_i \mid \alpha_i \notin \Theta \}$ and $\t(\Theta)^*_{\Q}$ has a root system isomorphic to $\Phi_\Theta$ (cf. \cite[Appendix~B]{EHNT}). 
In particular, the image of $\Theta$ under the natural projection $\xi: \t^*_\Q \twoheadrightarrow \t(\Theta)^*_{\Q}$ forms simple roots and hence we have a section $\t(\Theta)^*_{\Q} \rightarrow \t^*_\Q$ of $\xi$ by sending $\overline{\alpha_i}$ to $\alpha_i$ for any $\alpha_i \in \Theta$.
This induces a section $\RR(\Theta)/({\RR(\Theta)}^{W_\Theta}_+) \rightarrow \RR/(\RR^{W}_+)$ of the map in \eqref{eq:iota} which is a $W_\Theta$-equivariant map.
Thus, we obtain the following lemma.

\begin{lemma} \label{lemma:section_equivariant}
There is a section $s: H^*(P/B) \rightarrow H^*(G/B)$ of $\iota^*$ such that $s$ is a $W_\Theta$-equivariant map.
\end{lemma}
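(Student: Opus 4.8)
The plan is to make precise the construction indicated in the paragraphs above the statement. By Borel's presentation \eqref{eq:Borel} and the identification $H^*(P/B)\cong\RR(\Theta)/(\RR(\Theta)^{W_\Theta}_+)$, the map $\iota^*$ is the surjective composite \eqref{eq:iota}, which is a degree‑preserving, $W_\Theta$‑equivariant morphism; so it suffices to produce a $W_\Theta$‑equivariant graded linear right inverse of it. First I would record that the assignment $\overline{\alpha_i}\mapsto\alpha_i$ for $\alpha_i\in\Theta$ gives a well‑defined linear section $\sigma\colon\t(\Theta)^*_\Q\to\t^*_\Q$ of $\xi$: well‑defined because the $\overline{\alpha_i}$ with $\alpha_i\in\Theta$ form a basis of $\t(\Theta)^*_\Q$ (they are the simple system of the root system $\Phi_\Theta$ carried by $\t(\Theta)^*_\Q$), and $W_\Theta$‑equivariant because $W_\Theta$ preserves $\Phi_\Theta\subset\spanned_\Q\Theta$. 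Passing to symmetric algebras, $\Sym(\sigma)\colon\RR(\Theta)\to\RR$ is a $W_\Theta$‑equivariant algebra homomorphism with $\Sym(\xi)\circ\Sym(\sigma)=\id_{\RR(\Theta)}$.

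Next, using that $\Q[W_\Theta]$ is semisimple (finite group, characteristic zero), I would choose a graded $W_\Theta$‑stable complement $C$ of the ideal $(\RR(\Theta)^{W_\Theta}_+)$ inside $\RR(\Theta)$, so that the projection restricts to a $W_\Theta$‑equivariant isomorphism $C\xrightarrow{\,\sim\,}\RR(\Theta)/(\RR(\Theta)^{W_\Theta}_+)=H^*(P/B)$. Then I would set $s$ to be the composite
\[
H^*(P/B)\xrightarrow{\,\sim\,}C\hookrightarrow\RR(\Theta)\xrightarrow{\Sym(\sigma)}\RR\twoheadrightarrow\RR/(\RR^W_+)=H^*(G/B),
\]
which is $W_\Theta$‑equivariant since each factor is. That $\iota^*\circ s=\id$ follows from a short diagram chase: because $(\RR^W_+)\subseteq(\RR^{W_\Theta}_+)$ and $\Sym(\xi)\circ\Sym(\sigma)=\id$, the composite $\RR(\Theta)\xrightarrow{\Sym(\sigma)}\RR\twoheadrightarrow\RR/(\RR^W_+)\xrightarrow{\iota^*}H^*(P/B)$ is the canonical quotient $\RR(\Theta)\twoheadrightarrow\RR(\Theta)/(\RR(\Theta)^{W_\Theta}_+)$, whose restriction to $C$ inverts the isomorphism $C\xrightarrow{\,\sim\,}H^*(P/B)$. (Alternatively, and more cheaply, one can skip $\sigma$ entirely: $\iota^*$ is a degree‑preserving surjection of $\Q[W_\Theta]$‑modules, hence splits $W_\Theta$‑equivariantly by semisimplicity, which is all the lemma asserts.)

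The step I expect to need the most attention is the one point where the obvious shortcut fails: $\Sym(\sigma)$ does \emph{not} descend to a ring homomorphism $H^*(P/B)\to H^*(G/B)$, because it does not carry $(\RR(\Theta)^{W_\Theta}_+)$ into $(\RR^W_+)$ — already for $G=\SL_3$ with $\Theta=\{\alpha_1\}$ one has $\Sym(\sigma)(\overline{\alpha_1}^2)=\alpha_1^2\neq 0$ in $H^*(G/B)$. This forces the construction to pass through a $W_\Theta$‑stable linear complement (equivalently, through Maschke), and explains why $s$ is obtained only as a $W_\Theta$‑equivariant \emph{linear} section rather than as a morphism of algebras. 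Everything else is bookkeeping with the commutative square relating \eqref{eq:iota}, $\xi$, and $\sigma$.
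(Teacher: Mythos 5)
Your proposal is correct, and it follows the same route as the paper's own argument (which is the discussion preceding the lemma, ending with ``Thus, we obtain the following lemma''): Borel's presentation, the identification \eqref{eq:iota}, and the linear section $\sigma\colon\t(\Theta)^*_\Q\to\t^*_\Q$ of $\xi$ sending $\overline{\alpha_i}\mapsto\alpha_i$. Where you genuinely add something is at the final step. The paper simply asserts that $\sigma$ ``induces'' a section of \eqref{eq:iota}; read as ``$\Sym(\sigma)$ descends to the quotients,'' that assertion fails, and your $\SL_3$ counterexample is valid: with $\Theta=\{\alpha_1\}$ one has $\overline{\alpha_1}^2\in(\RR(\Theta)^{W_\Theta}_+)$ but $\Sym(\sigma)(\overline{\alpha_1}^2)=\alpha_1^2=-3x_1x_2\neq 0$ in $H^4(\SL_3(\C)/B)$, so $\Sym(\sigma)$ does not carry $(\RR(\Theta)^{W_\Theta}_+)$ into $(\RR^W_+)$. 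Your repair --- route the map through a graded $W_\Theta$-stable complement $C$ of $(\RR(\Theta)^{W_\Theta}_+)$, which exists by semisimplicity of $\Q[W_\Theta]$, and check $\iota^*\circ s=\id$ using that $\Sym(\xi)\circ\Sym(\sigma)=\id$ makes the relevant composite the canonical quotient $\RR(\Theta)\twoheadrightarrow\RR(\Theta)/(\RR(\Theta)^{W_\Theta}_+)$ --- is exactly the missing ingredient, and it yields a \emph{linear} $W_\Theta$-equivariant section, which is all the lemma and the subsequent Leray--Hirsch argument require. Your parenthetical shortcut is also worth noting: since $\iota^*$ is a degree-preserving surjection of $\Q[W_\Theta]$-modules, Maschke's theorem alone splits it equivariantly, so the entire detour through $\RR(\Theta)$ and $\sigma$ is dispensable for the statement as written; the explicit construction only buys a more concrete description of the image of $s$.
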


\subsection{Main theorem}

Our main theorem is as follows.

\begin{theorem} \label{theorem:main}
Let $\xx \in \g$ and $H \subset \g$ be a $\p$-Hessenberg space.
\begin{enumerate}
\item[(1)] There is an isomorphism 
\begin{align} \label{eq:LerayHirsch}
H^*(\Hess(\xx,H)) \cong H^*(P/B) \otimes_\Q H^*(\Hess_\Theta(\xx,H)) 
\end{align}
as $W_\Theta$-modules.
\item[(2)] The homomorphism $\pi_H^*: H^*(\Hess_\Theta(\xx,H)) \rightarrow H^*(\Hess(\xx,H))$ induced from the natural projection $\pi_H: \Hess(\xx,H) \to \Hess_\Theta(\xx,H)$ is injective.
\item[(3)] The image of $\pi_H^*$ coincides with $H^*(\Hess(\xx,H))^{W_\Theta({\rm star})}$, which is the invariants in $H^*(\Hess(\xx,H))$ under the star action of $W_\Theta$. 
In other words, the injective homomorphism $\pi_H^*: H^*(\Hess_\Theta(\xx,H)) \hookrightarrow H^*(\Hess(\xx,H))$ yields the isomorphism of graded $\Q$-algebras
\begin{align} \label{eq:main}
H^*(\Hess_\Theta(\xx,H)) \cong H^*(\Hess(\xx,H))^{W_\Theta({\rm star})}.
\end{align} 
\end{enumerate}
\end{theorem}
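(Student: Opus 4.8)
The plan is to deduce all three statements from the fiber bundle $P/B\xhookrightarrow{\iota_H}\Hess(\xx,H)\xrightarrow{\pi_H}\Hess_\Theta(\xx,H)$ of Lemma~\ref{lemma:fiber bundle} together with the Leray--Hirsch theorem, making everything $W_\Theta$-equivariant. The crucial input is that the restriction map $\iota_H^*\colon H^*(\Hess(\xx,H))\to H^*(P/B)$ is surjective \emph{and} admits a $W_\Theta$-equivariant section. To get this, I would consider the commutative square relating the closed embedding $\Hess(\xx,H)\hookrightarrow G/B$ to the fiber inclusions, so that $\iota_H^*$ fits into a diagram with the surjection $\iota^*\colon H^*(G/B)\twoheadrightarrow H^*(P/B)$ of Lemma~\ref{lemma:section_equivariant}. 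Since $P/B\cong\Levi_\Theta/B_\Theta$ sits inside $\Hess(\xx,H)$ as the fiber of $\pi_H$ over the base point $eP$ (note $eP\in\Hess_\Theta(\xx,H)$ because $\xx\in\b\subset H$, after translating $\xx$ into $\b$, which we may do by the $\Ad(G)$-conjugation isomorphism; alternatively one restricts the bundle over any point), the section $s\colon H^*(P/B)\to H^*(G/B)$ followed by the restriction $H^*(G/B)\to H^*(\Hess(\xx,H))$ gives a $W_\Theta$-equivariant section $s_H$ of $\iota_H^*$, because all the maps in sight commute with the star action (the star action on $H^*(\Hess(\xx,H))$ is by definition the restriction of the star action on $H^*(G/B)$, and $\iota^*$, $s$ are $W_\Theta$-equivariant by Lemma~\ref{lemma:section_equivariant}).

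Granting this, part~(1) follows: Leray--Hirsch applies since the classes $s_H(\iota_H^* (\text{basis of }H^*(P/B)))$ restrict to a basis of $H^*(P/B)$ on each fiber, giving the $H^*(\Hess_\Theta(\xx,H))$-module isomorphism $H^*(\Hess(\xx,H))\cong H^*(P/B)\otimes_\Q H^*(\Hess_\Theta(\xx,H))$; and this isomorphism is $W_\Theta$-equivariant because the chosen classes lie in the image of the $W_\Theta$-equivariant section and the $W_\Theta$-action on the base (hence on $\pi_H^*H^*(\Hess_\Theta(\xx,H))$) is trivial by Lemma~\ref{lemma:piHequivariant}. Part~(2) is immediate from~(1): the Leray--Hirsch isomorphism identifies $\pi_H^*$ with $1\otimes\id$ on the degree-zero part $H^0(P/B)\otimes H^*(\Hess_\Theta(\xx,H))$, which is injective, or more simply any fiber bundle with a section on cohomology has injective $\pi_H^*$.

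For part~(3), the inclusion $\operatorname{im}\pi_H^*\subseteq H^*(\Hess(\xx,H))^{W_\Theta(\mathrm{star})}$ is clear since $W_\Theta$ acts trivially on $H^*(\Hess_\Theta(\xx,H))$ and $\pi_H$ is $W_\Theta$-equivariant. For the reverse inclusion I would use the $W_\Theta$-module decomposition from~(1): taking $W_\Theta$-invariants commutes with tensoring over $\Q$ by the trivial module $\pi_H^*H^*(\Hess_\Theta(\xx,H))$, so $H^*(\Hess(\xx,H))^{W_\Theta}\cong H^*(P/B)^{W_\Theta}\otimes_\Q H^*(\Hess_\Theta(\xx,H))$; since $H^*(P/B)^{W_\Theta}=H^0(P/B)=\Q$ (the star-invariants in the cohomology of the flag variety $\Levi_\Theta/B_\Theta$ are just the constants, by Borel's presentation applied to $\Levi_\Theta$), this invariant subring is exactly $\pi_H^*H^*(\Hess_\Theta(\xx,H))$. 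Finally, $\pi_H^*$ is a ring homomorphism, so the resulting bijection $H^*(\Hess_\Theta(\xx,H))\xrightarrow{\sim}H^*(\Hess(\xx,H))^{W_\Theta(\mathrm{star})}$ is an isomorphism of graded $\Q$-algebras. The main obstacle I anticipate is the first step: verifying carefully that the fiber $P/B\subset\Hess(\xx,H)$ is genuinely the flag variety $\Levi_\Theta/B_\Theta$ embedded $W_\Theta$-equivariantly and that the section $s$ of Lemma~\ref{lemma:section_equivariant} descends compatibly under restriction from $G/B$ to $\Hess(\xx,H)$ — i.e.\ that the square of restriction maps commutes and respects the star action — since that is what upgrades a plain Leray--Hirsch splitting to a $W_\Theta$-equivariant one.
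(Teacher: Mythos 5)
Your proposal is correct and follows essentially the same route as the paper: a $W_\Theta$-equivariant section $s_H=j_H^*\circ s$ of $\iota_H^*$ obtained from Lemma~\ref{lemma:section_equivariant}, the Leray--Hirsch theorem for the bundle of Lemma~\ref{lemma:fiber bundle}, and passage to $W_\Theta$-invariants using $H^*(P/B)^{W_\Theta}=\Q$ and the triviality of the action on the base from Lemma~\ref{lemma:piHequivariant}. The only cosmetic difference is that for part~(2) you argue directly from the Leray--Hirsch splitting (identifying $\pi_H^*$ with $1\otimes\id$), whereas the paper cites the analogous argument in \cite{Hor24}; both are fine.
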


\begin{remark}
If $\xx$ is a regular nilpotent element in $\g$, then the injectivity of $\pi_H^*$ and the isomorphism in \eqref{eq:main} was proved in \cite[Theorem 5.4]{Hor24}. 
\end{remark}

\begin{proof}[Proof of Theorem~\ref{theorem:main}]
By Lemma~\ref{lemma:fiber bundle} we obtain the following commutative diagram:
\begin{center}
\begin{tikzcd}
H^*(G/P) \arrow[r, "\pi^*"] \arrow[rightarrow,d, "j_{H, \Theta}^*"'] &[0.5em] H^*(G/B) \arrow[rightarrow,d, "j_H^*"'] \arrow[r, "\iota^*"] &[0.5em] H^*(P/B) \\
H^*(\Hess_\Theta(\xx,H)) \arrow[r, "\pi_H^*"] &[0.5em] H^*(\Hess(\xx,H)) \arrow[r, "\iota_H^*"] & H^*(P/B) \arrow[u,equal] 
\end{tikzcd}
\end{center}
where the horizontal arrows are induced from the fiber bundles and the vertical arrows are induced from the inclusion maps $j_H: \Hess(\xx,H) \hookrightarrow G/B$ and $j_{H, \Theta}: \Hess_\Theta(\xx,H) \hookrightarrow G/P$. 
By Lemma~\ref{lemma:section_equivariant} we can take a section $s: H^*(P/B) \rightarrow H^*(G/B)$ of $\iota^*$ such that $s$ is a $W_\Theta$-equivariant map. 
Define $s_H \coloneqq j_H^* \circ s$. 
Note that $\iota_H^*$ is surjective since $\iota^*$ is surjective.
Then one can verify that $s_H$ is a section of $\iota_H^*$ and $s_H$ is a $W_\Theta$-equivariant map.
Applying the Leray--Hirsch theorem for the fibration $P/B \hookrightarrow \Hess(\xx,H) \rightarrow \Hess_\Theta(\xx,H)$, we obtain the following isomorphism of $\Q$-vector spaces 
\begin{align*}
\varphi: H^*(P/B) \otimes_\Q H^*(\Hess_\Theta(\xx,H)) \cong H^*(\Hess(\xx,H)),
\end{align*}
which sends $\sum \alpha \otimes \beta$ to $\sum s_H(\alpha) \cdot \pi_H^*(\beta)$. 
Since the isomorphism $\varphi$ above is a $W_\Theta$-equivariant map by Lemma~\ref{lemma:piHequivariant}, we proved~(1).
The injectivity of $\pi_H^*$ follows from a similar discussion of \cite[Proposition~5.6 (2)]{Hor24}, proving~(2).
We prove the statement~(3). 
Since $\varphi$ is an isomorphism as $W_\Theta$-modules, we obtain 
\begin{align*}
H^*(\Hess(\xx,H))^{W_\Theta({\rm star})} & \cong \big(H^*(P/B) \otimes_\Q H^*(\Hess_\Theta(\xx,H)) \big)^{W_\Theta({\rm star})} \\
& = H^*(P/B)^{W_\Theta({\rm star})} \otimes_\Q H^*(\Hess_\Theta(\xx,H)) \\ 
& \cong \Q \otimes_\Q H^*(\Hess_\Theta(\xx,H)) \\ 
& \cong H^*(\Hess_\Theta(\xx,H)), 
\end{align*}
where the second equality follows from Lemma~\ref{lemma:piHequivariant}. 
The isomorphism above is exactly given by $\pi_H^*$. 
This completes the proof.
\end{proof}

\section{Equivariant cohomology of regular semisimple partial Hessenberg varieties} \label{sect:equivariant cohomology of regular semisimple}

\subsection{Regular semisimple Hessenberg varieties}
An element $\ss \in \g$ is \emph{regular semisimple} if its centralizer in $G$ is a maximal torus. 
Let $\ss \in \g$ be a regular semisimple element and $H \subset \g$ a Hessenberg space. 
Then $\Hess(\ss,H)$ is called a \emph{regular semisimple Hessenberg variety}.
It is known that the regular semisimple Hessenberg variety $\Hess(\ss,H)$ is smooth equidimensional of dimension $\dim H/\b$ (\cite[Theorem~6]{dMPS}). 
We may assume that the regular semisimple element $\ss$ is in $\t$ because $\Hess(\ss,H)$ is isomorphic to $\Hess(\ss',H)$ when $\ss$ and $\ss'$ are conjugate in $\g$. Then, the centralizer of $\ss$ in $G$ is the maximal torus $T$, so the natural action of $T$ on $G/B$ preserves $\Hess(\ss,H)$. 

\begin{proposition}[{\cite[Proposition~3, Lemma~7]{dMPS}}] \label{prop:dMPS}
The following hold. 
\begin{enumerate}
\item The fixed point set $\Hess(\ss,H)^T$ of $T$ in $\Hess(\ss,H)$ coincides with $(G/B)^T \cong W =N_G(T)/T$.
\item For any $z \in N_G(T)$, the tangent space of $\Hess(\ss,H)$ at $w=zT$ is isomorphic to 
\begin{align} \label{eq:regular semisimple tangent space}
T_w \Hess(\ss,H) \cong zH/z\b\qquad \text{as $T$-modules}.
\end{align}
Here, $zH$ (resp. $z\b$) is the image of $H$ (resp. $\b$) under the differential map of the left multiplication $G \rightarrow G$ by $z$, that is, $g \mapsto zg$. 
\end{enumerate}
\end{proposition}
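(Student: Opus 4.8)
The plan is to prove the two parts separately: part (1) is a standard torus fixed-point computation, while for part (2) I would realize $\Hess(\ss,H)$ as the zero locus of an explicit $T$-equivariant section of a vector bundle over $G/B$ and differentiate that section at a fixed point. For (1): since $\ss\in\t$ is fixed by $\Ad(T)$, the locus $\Hess(\ss,H)$ is $T$-invariant (for $t\in T$ and $gB\in\Hess(\ss,H)$ one has $\Ad((tg)^{-1})(\ss)=\Ad(g^{-1})\Ad(t^{-1})(\ss)=\Ad(g^{-1})(\ss)\in H$), so $\Hess(\ss,H)^T\subseteq(G/B)^T$, and $(G/B)^T$ is the well-known finite set $\{zB\mid z\in N_G(T)\}\cong W$. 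Conversely, for any $z\in N_G(T)$ we have $\Ad(z^{-1})(\ss)\in\t\subseteq\b\subseteq H$, so $zB\in\Hess(\ss,H)$; hence $\Hess(\ss,H)^T=(G/B)^T\cong W$, which is (1).

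For (2): a Hessenberg space $H$ is a $B$-submodule of $\g$ containing $\b$, so $\g/H$ is a $B$-module and we may form the $G$-equivariant vector bundle $\mathcal{E}=G\times_B(\g/H)$ over $G/B$. The assignment $gB\mapsto[g,\,\Ad(g^{-1})(\ss)+H]$ is a well-defined global section $\sigma$ of $\mathcal{E}$ (well-definedness uses that $H$ is $\Ad(B)$-stable), it is $T$-equivariant because $\ss$ is $\Ad(T)$-fixed, and its zero locus is exactly $\Hess(\ss,H)$. I would then compute $d\sigma$ at a fixed point $w=zB$, $z\in N_G(T)$. Using the standard $\Ad(T)$-equivariant identifications $T_w(G/B)\cong\g/z\b$ (via $Z\mapsto\frac{d}{dt}\big|_{0}\exp(tZ)zB$) and $\mathcal{E}_w\cong\g/zH$, and the fact that the value of $\sigma$ at $w$ is already zero, a one-term Taylor expansion gives the $\Ad(T)$-equivariant map
\[
d\sigma_w\colon\ \g/z\b\longrightarrow\g/zH,\qquad Z+z\b\ \longmapsto\ -[Z,\ss]+zH .
\]
Hence $\ker d\sigma_w=\{\,Z+z\b\mid[Z,\ss]\in zH\,\}$, and substituting $Z=\Ad(z)Z'$, $\ss=\Ad(z)\ss'$ with $\ss'=\Ad(z^{-1})(\ss)\in\t$ (so that $[Z,\ss]=\Ad(z)[Z',\ss']$) reduces the identity $\ker d\sigma_w=zH/z\b$ to the elementary claim $\{\,Z'\in\g\mid[Z',\ss']\in H\,\}=H$.

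That claim is immediate: writing $Z'$ in terms of its $\t$-component and root vectors $c_\alpha E_\alpha$ yields $[Z',\ss']=-\sum_{\alpha\in\Phi}c_\alpha\,\alpha(\ss')E_\alpha$; since $\ss'$ is regular semisimple, $\alpha(\ss')\neq0$ for every $\alpha\in\Phi$, and since $\g_\alpha\subseteq H$ precisely when $\alpha\in\Phi^+$ or $\alpha=-\beta$ with $\beta\in I(H)$, the requirement $[Z',\ss']\in H$ forces $c_{-\beta}=0$ for all $\beta\in\Phi^+\setminus I(H)$, i.e.\ $Z'\in H$; the reverse inclusion holds because $H$ is a $\t$-submodule. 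Thus $\ker d\sigma_w\cong zH/z\b$ as $T$-modules, of dimension $|I(H)|=\dim H/\b$. Since $\Hess(\ss,H)\subseteq\sigma^{-1}(0)$, every tangent vector of $\Hess(\ss,H)$ at $w$ is killed by $d\sigma_w$, so $T_w\Hess(\ss,H)\subseteq zH/z\b$; and because $\Hess(\ss,H)$ is smooth of dimension $\dim H/\b$ by \cite[Theorem~6]{dMPS} (cited above), a dimension count upgrades this inclusion to the $T$-equivariant isomorphism $T_w\Hess(\ss,H)\cong zH/z\b$, which is (2). (Alternatively one can avoid invoking smoothness by checking directly that $\sigma$ is transverse to the zero section along $\Hess(\ss,H)$, which simultaneously reproves equidimensionality.)

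I expect the only genuine subtlety to be bookkeeping: making the identifications of $T_w(G/B)$, of the fiber $\mathcal{E}_w$, and of the left translates $z\b\subseteq zH\subseteq\g$ mutually compatible and genuinely $T$-equivariant, so that the final isomorphism is one of $T$-modules rather than merely of vector spaces; together with the minor logical point that the section computation by itself only produces the inclusion $T_w\Hess(\ss,H)\subseteq zH/z\b$, with smoothness (or transversality) needed to force equality. The algebraic core — the claim $\{Z'\mid[Z',\ss']\in H\}=H$ — is trivial once regular semisimplicity of $\ss'$ is used.
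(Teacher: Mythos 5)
Your argument is correct. Note first that the paper itself offers no proof of this statement: it is imported verbatim from De~Mari--Procesi--Shayman (\cite[Proposition~3, Lemma~7]{dMPS}), so there is no in-paper argument to compare against. What you have written is a sound, self-contained reconstruction along the standard lines, and in fact your vector bundle $G\times_B(\g/H)$ with the section $gB\mapsto[g,\Ad(g^{-1})(\ss)+H]$ is exactly the device underlying the family $\mu_H\colon G\times_B H\to\g$ that the paper uses later in Section~5. Part (1) is complete as stated. In part (2), the kernel computation is right: the weight-space decomposition of $[Z',\ss']$ together with $\alpha(\ss')\neq 0$ for all $\alpha\in\Phi$ gives $\{Z'\mid[Z',\ss']\in H\}=H$, and the cited smoothness and equidimensionality from \cite[Theorem~6]{dMPS} legitimately upgrade the inclusion $T_w\Hess(\ss,H)\subseteq\ker d\sigma_w$ to an equality by dimension count. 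You correctly flag the one point worth being careful about: under your identification $Z\mapsto\frac{d}{dt}\big|_{0}\exp(tZ)zB$ the kernel is $\Ad(z)\b$ rather than the left-translate $dL_z(\b)\subset T_zG$ of the paper's wording, but these agree after pulling back by right translation, and either convention yields the same $T$-module $\bigoplus_{\alpha\in I(H)}\C_{-w(\alpha)}$, which is what the paper actually uses downstream in \eqref{eq:regular semisimple tangent space 2}.
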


It follows from Proposition~\ref{prop:dMPS}(2) and \eqref{eq:pHessenberg space} that 
\begin{equation} \label{eq:regular semisimple tangent space 2}
T_w\Hess(\ss,H)\cong \bigoplus_{\alpha\in I(H)}\C_{-w(\alpha)}\qquad \text{as $T$-modules},
\end{equation}
where $I(H)$ is the subset of $\Phi^+$ defined in \eqref{eq:pHessenberg space}. In particular, when $H=\mathfrak{g}$, we have $\Hess(\ss, H)=G/B$ and $I(H)=\Phi^+$; so \eqref{eq:regular semisimple tangent space 2} reduces to the following well-known fact: 
\begin{equation} \label{eq:flag tangent space}
T_w(G/B)\cong \bigoplus_{\alpha\in \Phi^+}\C_{-w(\alpha)} \qquad \text{as $T$-modules}.
\end{equation}

Let $ET \rightarrow BT \coloneqq ET/T$ be a universal principal $T$-bundle.
Then the $T$-equivariant cohomology $H^*_T(\Hess(\ss,H))$ of the $T$-space $\Hess(\ss,H)$ is defined by 
\[
H^*_T(\Hess(\ss,H))=H^*(ET \times_T \Hess(\ss,H))
\]
where $ET \times_T \Hess(\ss,H)$ denotes the Borel construction of the $T$-space $\Hess(\ss,H)$.
Note that the first projection $ET \times_T \Hess(\ss,H) \rightarrow BT$ yields an $H^*(BT)$-module structure on $H^*_T(\Hess(\ss,H))$. 
We may think of $BT$ as $(\C P^\infty)^n$, so $H^2(BT; \Z)$ is a free abelian group of rank $n$ and $H^*(BT; \Z)$ is a polynomial ring over $\Z$ in $n$ variables of degree $2$. 
We regard the $1$-dimensional $T$-module $\C_{\chi}$ associated to $\chi \in \t^*_\Z$ as a $T$-line bundle $\C_\chi$ over a point. Then, the $T$-equivariant first Chern class $c_1^T(\C_\chi)$ is an element of $H^2_T(\pt; \Z) = H^2(BT; \Z)$. The correspondence $\chi \mapsto c_1^T(\C_\chi)$ yields an isomorphism $\t^*_\Z \cong H^2(BT; \Z)$ and induces the following isomorphism 
\begin{align} \label{eq:identifyHBT}
\RR = \Sym \t^*_{\Q} \cong H^*(BT); \ \ \ \chi \mapsto c_1^T(\C_\chi).
\end{align}

For any two sets $M$ and $N$, we write $\Map(M,N)$ for the set of maps on $M$ taking a value in $N$.
Since the odd degree cohomology groups of $\Hess(\ss,H)$ vanish (\cite[Theorem~8]{dMPS}), the restriction map 
\begin{align*}
H^*_T(\Hess(\ss,H)) \rightarrow H^*_T(\Hess(\ss,H)^T) = \bigoplus_{w \in W} H^*_T(w) = \Map(W,H^*(BT))
\end{align*}
induced from the inclusion $\Hess(\ss,H)^T \subset \Hess(\ss,H)$ is injective. 
Thus, we may regard $H^*_T(\Hess(\ss,H))$ as an $H^*(BT)$-subalgebra of $\Map(W,H^*(BT))$. 
It is shown in \cite[Theorems 1.1 and 2.4]{GHZ} that $G/B$ is a GKM manifold and its GKM graph $\Gamma$ is as follows. 
\begin{enumerate}
\item The vertices of $\Gamma$ are $W$;
\item Two vertices $v, w$ of $\Gamma$ are on a common edge of $\Gamma$ if and only if $v=ws_\alpha$ for some $\alpha\in \Phi$, where we may assume $\alpha\in \Phi^+$ since $s_\alpha=s_{-\alpha}$; 
\item The label on the common edge above is $w(\alpha)$ up to sign. 
\end{enumerate} 
The edge in (2) above corresponds to a 1-dimensional $T$-orbit $\mathcal{O}$ whose closure $\overline{\mathcal{O}}$ is diffeomorphic to $\C P^1$ and $\overline{\mathcal{O}} \setminus \mathcal{O}$ consists of the two fixed points $v$ and $w$. The label $w(\alpha)$ on the edge is the weight of the orbit $\mathcal{O}$, which means that the isotropy subgroup of the 1-dimensional $T$-orbit $\mathcal{O}$ is $\ker(w(\alpha)\colon T\to \C^*)$. Therefore, the edge in (2) above corresponds to the weight space $\C_{-w(\alpha)}$ in \eqref{eq:flag tangent space}. This together with \eqref{eq:regular semisimple tangent space 2} implies that the GKM graph $\Gamma(H)$ of $\Hess(\ss,H)$ with the $T$-action is a GKM subgraph of $\Gamma$ where the vertex set of $\Gamma(H)$ is the same as that of $\Gamma$, that is $W$, and $v,w\in W$ are on a common edge of $\Gamma(H)$ if and only if $v=ws_\alpha$ for some $\alpha\in I(H)$. This means that the lemma below follows from the GKM theory applied to $\Hess(\ss,H)$ with the $T$-action. 

\begin{lemma} $($\cite[Proposition~8.2]{AHMMS}$)$ \label{lemm:GKM_regular_semisimple}
Let $I(H)$ be the subset of $\Phi^+$ defined in \eqref{eq:pHessenberg space}. Then the image of the restriction map $H^*_T(\Hess(\ss,H)) \hookrightarrow \Map(W,H^*(BT))$ is given by 
\begin{equation} \label{eq:GKMHess(s,H)}
\left\{ f \in \Map(W,H^*(BT)) \mid 
f(w)-f(v) \in (w(\alpha)) \ {\rm if} \ v=ws_\alpha\ {\rm for \ some} \ \alpha \in I(H) \right\}
\end{equation}
where $w(\alpha)$ is regarded as an element of $H^2(BT)$ through the isomorphism \eqref{eq:identifyHBT} and $(w(\alpha))$ denotes the ideal in $H^*(BT)$ generated by $w(\alpha)$. 
\end{lemma}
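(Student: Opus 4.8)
The statement is the standard GKM description of $H^*_T(\Hess(\ss,H))$, and the plan is to derive it from the general GKM theorem applied to the $T$-action on $\Hess(\ss,H)$. First I would check that the hypotheses of GKM theory are satisfied: $\Hess(\ss,H)$ is a smooth projective variety (by \cite[Theorem~6]{dMPS}), its odd cohomology vanishes (by \cite[Theorem~8]{dMPS}), and the $T$-fixed points are isolated, being indexed by $W$ via Proposition~\ref{prop:dMPS}(1). It remains to identify the one-dimensional $T$-orbits, equivalently the pairs of fixed points joined by an invariant $\C P^1$, together with the weight of the $T$-action along each such $\C P^1$; the GKM image is then exactly the set of maps $f\colon W\to H^*(BT)$ such that $f(w)-f(v)$ lies in the ideal generated by the relevant weight whenever $w,v$ are joined by such a curve.

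\textbf{Key steps.} The heart of the argument is the local analysis at a fixed point $w=zT$. By Proposition~\ref{prop:dMPS}(2) and the reformulation \eqref{eq:regular semisimple tangent space 2}, the $T$-weights on the tangent space $T_w\Hess(\ss,H)$ are precisely $\{-w(\alpha)\mid \alpha\in I(H)\}$. Each such weight is a nonzero (hence, since $\ss$ is regular semisimple, the weights are pairwise non-proportional in the relevant sense) character, so the one-dimensional orbits through $w$ correspond bijectively to the elements $\alpha\in I(H)$; the invariant curve in the direction of the weight $-w(\alpha)$ connects $w$ to the fixed point obtained by ``reflecting'' in that direction. I would then identify this other endpoint as $ws_\alpha$: indeed the corresponding invariant $\C P^1$ inside $G/B$ is the Schubert curve joining $wB$ and $ws_\alpha B$ (this is the standard fact for $G/B$ coming from \eqref{eq:flag tangent space}), and this curve lies in $\Hess(\ss,H)$ precisely because $\alpha\in I(H)$, i.e.\ $\g_{-\alpha}\subset H$. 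The weight of the $T$-action on this curve is (up to sign, which is irrelevant for the ideal it generates) $w(\alpha)$. Feeding this into the GKM theorem yields exactly the description \eqref{eq:GKMHess(s,H)}: a class $f$ is in the image iff for every fixed point $w$ and every $\alpha\in I(H)$, the difference $f(w)-f(ws_\alpha)$ is divisible by $w(\alpha)$ in $H^*(BT)$.

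\textbf{Main obstacle.} The routine part is quoting GKM; the genuine content is the claim that the one-dimensional $T$-orbits of $\Hess(\ss,H)$ are exactly the Schubert curves $C_{w,ws_\alpha}$ with $\alpha\in I(H)$, with the stated weights. Proving that these curves actually lie in $\Hess(\ss,H)$ (and that there are no others) is where care is needed: one direction follows from the tangent-weight count at the fixed points, which forces the orbit closure through $w$ in the direction $-w(\alpha)$ to be one-dimensional and $T$-invariant, hence a $T$-invariant $\C P^1$ in $G/B$, which must be $C_{w,ws_\alpha}$; the reverse containment $C_{w,ws_\alpha}\subset\Hess(\ss,H)$ amounts to the elementary verification that $\Ad(g^{-1})(\ss)\in H$ along the curve, using $\ss\in\t$ and $\g_{-\alpha}\subset H$. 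I would phrase this last verification concretely in the $\SL$-picture for the $\mathrm{SL}_2$ corresponding to $\alpha$, or cite the analogous computation already present in the literature on regular semisimple Hessenberg varieties (e.g.\ in the GKM description used by Tymoczko~\cite{Tym08}); in the write-up I expect this step, not the abstract GKM input, to require the most detail.
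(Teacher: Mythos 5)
Your proposal is correct and follows essentially the same route as the paper: both verify the GKM hypotheses (isolated fixed points indexed by $W$, vanishing odd cohomology), read off the edges and labels of the GKM graph from the tangent-weight decomposition \eqref{eq:regular semisimple tangent space 2}, and identify the invariant curves with the $T$-invariant $\C P^1$'s of $G/B$ joining $w$ and $ws_\alpha$ (the paper cites \cite{GHZ} for this last point rather than doing the $\mathrm{SL}_2$-verification you sketch). The only cosmetic difference is that the paper deduces that $\Gamma(H)$ is a GKM subgraph of $\Gamma$ directly from the comparison of \eqref{eq:regular semisimple tangent space 2} with \eqref{eq:flag tangent space}, whereas you propose an additional explicit check that the curves lie in $\Hess(\ss,H)$.
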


\subsection{Regular semisimple partial Hessenberg varieties}

One can easily see that the results for regular semisimple Hessenberg varieties in the previous subsection are generalized to regular semisimple partial Hessenberg varieties. In this subsection we briefly explain them. 

\begin{lemma} \label{lemma:partial regular semisimple property}
Let $\ss \in \g$ be a regular semisimple element and $H \subset \g$ a $\p$-Hessenberg space. 
Then the following hold for the regular semisimple partial Hessenberg variety $\Hess_\Theta(\ss,H)$. 
\begin{enumerate}
\item[(1)] $\Hess_\Theta(\ss,H)$ is smooth equidimensional of dimension $\dim H/\p$; 
\item[(2)] The odd degree cohomology groups of $\Hess_\Theta(\ss,H)$ vanish. 
\end{enumerate}
\end{lemma}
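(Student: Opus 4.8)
The plan is to deduce both statements from the corresponding facts for the full-flag regular semisimple Hessenberg variety $\Hess(\ss,H)$ together with the fiber bundle $P/B \to \Hess(\ss,H) \xrightarrow{\pi_H} \Hess_\Theta(\ss,H)$ of Lemma~\ref{lemma:fiber bundle}. First I would check that $\pi_H$ is a morphism of smooth varieties: since $G/B \to G/P$ is a smooth fiber bundle and $\Hess(\ss,H)$, $\Hess_\Theta(\ss,H)$ are the preimages of each other under this map intersected with the total space, $\pi_H$ is again a locally trivial fiber bundle with fiber $P/B$, which is the smooth projective flag variety $\Levi_\Theta/B_\Theta$. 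For (1), one argues that $\Hess(\ss,H)$ is smooth equidimensional of dimension $\dim H/\b$ by \cite[Theorem~6]{dMPS}, that $\dim P/B = \dim \p/\b = |\Phi_\Theta^+|$, and that a fiber bundle with smooth equidimensional total space and smooth connected fiber has smooth equidimensional base, of dimension $\dim H/\b - \dim \p/\b = \dim H/\p$; alternatively, one can adapt the direct tangent-space computation of Proposition~\ref{prop:dMPS}(2) to the $G/P$ setting, identifying $T_{wP}\Hess_\Theta(\ss,H) \cong zH/z\p$ for $w = zT$, whose dimension is the constant $\dim H/\p$ at every fixed point.

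For (2), the cleanest route is via the Leray--Hirsch theorem exactly as in the proof of Theorem~\ref{theorem:main}: since $\iota^*\colon H^*(G/B) \to H^*(P/B)$ is surjective, its restriction $\iota_H^*\colon H^*(\Hess(\ss,H)) \to H^*(P/B)$ is surjective, so the fibration $\pi_H$ is totally non-homologous to zero and one gets an additive isomorphism $H^*(\Hess(\ss,H)) \cong H^*(P/B) \otimes_\Q H^*(\Hess_\Theta(\ss,H))$. Both $H^*(\Hess(\ss,H))$ (by \cite[Theorem~8]{dMPS}) and $H^*(P/B)$ are concentrated in even degrees; since $H^*(P/B)$ contains $\Q$ in degree $0$, the tensor factor $H^*(\Hess_\Theta(\ss,H))$ injects into $H^*(\Hess(\ss,H))$ via $\pi_H^*$ (degree-preserving), hence it too vanishes in odd degrees. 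One could instead run the GKM/Bia{\l}ynicki-Birula argument directly on $\Hess_\Theta(\ss,H)$: the $T$-action has isolated fixed points $(G/P)^T \cong W/W_\Theta$, and a generic one-parameter subgroup gives a paving by affine cells, forcing the odd cohomology to vanish.

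I expect no serious obstacle here; the only point requiring mild care is confirming that $\pi_H$ is genuinely a locally trivial fiber bundle (not just a proper surjection) so that Leray--Hirsch applies, and that $\Hess_\Theta(\ss,H)$ is nonempty and the fiber $P/B$ is connected — both of which follow from the corresponding structure on $G/B \to G/P$ restricted over the Hessenberg locus, as already recorded in Lemma~\ref{lemma:fiber bundle} and Lemma~\ref{lemma:piHequivariant}. Since the statement is explicitly flagged as an easy generalization, I would keep the proof to a few lines, citing \cite{dMPS} for the full-flag statements and invoking Lemma~\ref{lemma:fiber bundle} together with Leray--Hirsch for the passage to $G/P$.
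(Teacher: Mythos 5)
Your proposal is correct and follows essentially the same route as the paper: statement (1) is deduced from the fiber bundle of Lemma~\ref{lemma:fiber bundle} together with the smoothness and dimensions of $P/B$ and $\Hess(\ss,H)$, and statement (2) from the Leray--Hirsch isomorphism \eqref{eq:LerayHirsch} combined with the evenness of $H^*(\Hess(\ss,H))$. The paper's proof is just a terser version of yours; the alternative arguments you sketch (tangent-space computation, GKM paving) are not needed.
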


\begin{proof}
Consider the fiber bundle $P/B \xhookrightarrow{\iota_H} \Hess(\ss,H) \xrightarrow{\pi_H} \Hess_\Theta(\ss,H)$ in Lemma~\ref{lemma:fiber bundle}. Since both $P/B$ and $\Hess(\ss,H)$ are smooth of dimensions $\dim \p/\b$ and $\dim H/\b$ respectively, statement (1) follows. Moreover, statement (2) follows from \eqref{eq:LerayHirsch}. 
\end{proof}

For a regular semisimple element $\ss$ in $\t$ and a $\p$-Hessenberg space $H \subset \g$, the natural $T$-action on $G/P$ preserves $\Hess_\Theta(\ss,H)$ and the $T$-fixed point set $\Hess_\Theta(\ss,H)^T$ coincides with $(G/P)^T \cong W/W_\Theta$. 
In what follows, we write $\overline{w} \in W/W_\Theta$ for the left coset of $W_\Theta$ with representative $w \in W$. 

\begin{lemma} \label{lemm:partial_regular semisimple tangent space}
Let $\ss$ be a regular semisimple element in $\t$.
For $z \in N_G(T)$, we set $w=zT \in W$. Then 
\begin{align} \label{eq:partial regular semisimple tangent space}
T_{\overline{w}} \Hess_\Theta(\ss,H) \cong zH/z\p\qquad\text{as $T$-modules}.
\end{align}
\end{lemma}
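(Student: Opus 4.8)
The plan is to deduce the tangent space formula for $\Hess_\Theta(\ss,H)$ at a $T$-fixed point directly from the corresponding formula for $\Hess(\ss,H)$ (Proposition~\ref{prop:dMPS}(2)) together with the fiber bundle structure of Lemma~\ref{lemma:fiber bundle}. First I would fix $z \in N_G(T)$ with $w = zT$ and consider the point $zB \in \Hess(\ss,H)$; under the projection $\pi_H$ this maps to $\overline{w} = zP \in \Hess_\Theta(\ss,H)$, and both points are $T$-fixed. The fibration $P/B \xhookrightarrow{\iota_H} \Hess(\ss,H) \xrightarrow{\pi_H} \Hess_\Theta(\ss,H)$ gives a short exact sequence of $T$-modules
\begin{align*}
0 \to T_{zB}(P/B) \to T_{zB}\Hess(\ss,H) \xrightarrow{d\pi_H} T_{\overline{w}}\Hess_\Theta(\ss,H) \to 0,
\end{align*}
where I identify the fiber over $\overline{w}$ with the $T$-invariant submanifold $\pi_H^{-1}(\overline{w})$ and note that its tangent space at $zB$ is the vertical tangent space.

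Next I would identify each term. By Proposition~\ref{prop:dMPS}(2), $T_{zB}\Hess(\ss,H) \cong zH/z\b$ as $T$-modules. The fiber $\pi_H^{-1}(\overline{w})$ is the translate $z \cdot (P/B)$ sitting inside $G/B$ (more precisely, it is the image of $\Levi_\Theta/B_\Theta$ translated by $z$), so its tangent space at $zB$ is $z\p/z\b$ as a $T$-module — this is the analogue of Proposition~\ref{prop:dMPS}(2) applied to the flag variety $P/B$ of $\Levi_\Theta$, or can be seen directly since $T_{eB}(P/B) \cong \p/\b$ and left translation by $z$ normalizes $T$. Then the exact sequence reads $0 \to z\p/z\b \to zH/z\b \to T_{\overline{w}}\Hess_\Theta(\ss,H) \to 0$, and since $\b \subset \p \subset H$ (the last inclusion because $H$ is a $\p$-Hessenberg space), we get $T_{\overline{w}}\Hess_\Theta(\ss,H) \cong (zH/z\b)/(z\p/z\b) \cong zH/z\p$ as $T$-modules, which is exactly \eqref{eq:partial regular semisimple tangent space}.

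Alternatively — and perhaps more cleanly — I would argue directly as in the original De\! Mari--Procesi--Shayman computation: the tangent space of $G/P$ at $zP$ is $\g/z\p$ (via the identification $T_{eP}(G/P) = \g/\p$ and left translation by $z$), and $\Hess_\Theta(\ss,H) \subset G/P$ is cut out near $zP$ by the condition $\Ad(g^{-1})(\ss) \in H$; differentiating the map $g \mapsto \Ad(g^{-1})(\ss) \bmod H$ at $zP$ and using that $\ss$ is regular semisimple (so $\ad(\ss)$ is an isomorphism on the span of the root spaces, hence the relevant differential is surjective with the expected kernel), one identifies $T_{\overline{w}}\Hess_\Theta(\ss,H)$ with $zH/z\p$ directly. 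The smoothness part of Lemma~\ref{lemma:partial regular semisimple property}(1) guarantees this is the honest tangent space rather than merely a Zariski tangent space.

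The main obstacle I anticipate is purely one of bookkeeping with the $T$-module identifications: making sure that "translation by $z$" is tracked consistently on all three tangent spaces so that the maps in the short exact sequence are genuinely $T$-equivariant, and that the identification $T_{\overline w}\Hess_\Theta(\ss,H) \cong \pi_H^{-1}(\overline w)$-vertical-tangent-quotient matches the one coming from $G/P$. This is entirely analogous to the full-flag case already handled in Proposition~\ref{prop:dMPS}, so no new idea is needed — the proof should be short, essentially "apply the fiber bundle of Lemma~\ref{lemma:fiber bundle} to the isomorphism \eqref{eq:regular semisimple tangent space}".
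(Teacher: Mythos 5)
Your proposal is correct and follows essentially the same route as the paper: both deduce the result from the short exact sequence of tangent $T$-modules coming from the fibration of Lemma~\ref{lemma:fiber bundle} together with the full-flag formula \eqref{eq:regular semisimple tangent space}. The only (immaterial) difference is that the paper first establishes $T_{\overline{e}}\Hess_\Theta(\ss,H)\cong H/\p$ at the identity coset and then transports it to $\overline{w}$ via left translation by $z$ applied to $\Hess_\Theta(\Ad(z^{-1})(\ss),H)$, whereas you run the exact sequence directly at $zB$, identifying the fiber tangent space there as $z\p/z\b$.
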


\begin{proof}
The fiber bundle $P/B \xhookrightarrow{\iota_H} \Hess(\ss,H) \xrightarrow{\pi_H} \Hess_\Theta(\ss,H)$ induces by differentiation an exact sequence $0 \rightarrow \p/\b \rightarrow T_{e} \Hess(\ss,H) \rightarrow T_{\overline{e}} \Hess_\Theta(\ss,H) \rightarrow 0$ as $T$-modules.
This with \eqref{eq:regular semisimple tangent space} yields an isomorphism 
\begin{align} \label{eq:partial regular semisimple tangent space identity}
T_{\overline{e}} \Hess_\Theta(\ss,H) \cong H/\p\qquad\text{as $T$-modules}.
\end{align}
Here we note that \eqref{eq:partial regular semisimple tangent space identity} holds for an arbitrary regular semisimple element $\ss$ in $\t$. We set $\ss'= \Ad(z^{-1})(\ss)$. Then the left multiplication by $z$ gives an isomorphism $\Hess_\Theta(\ss',H) \cong \Hess_\Theta(\ss,H)$. This induces an isomorphism $T_{\overline{e}} \Hess_\Theta(\ss',H) \cong T_{\overline{w}} \Hess_\Theta(\ss,H)$. Then the $T$-action on $T_{\overline{w}} \Hess_\Theta(\ss,H)$ becomes, under the isomorphism \eqref{eq:partial regular semisimple tangent space identity}, the action of $T$ on $H/\p$ given by $t \cdot [\vv] = z^{-1}tz[\vv]$ for $t \in T$ and $[\vv] \in H/\p$ (twisted action by $z^{-1}$). In other words, $T_{\overline{w}} \Hess_\Theta(\ss,H)$ is isomorphic to $zH/z\p$ as $T$-modules.
\end{proof}

It follows from \eqref{eq:pHessenberg space} and \eqref{eq:partial regular semisimple tangent space} that 
\begin{align} \label{eq:partial regular semisimple tangent space 2}
T_{\overline{w}} \Hess_\Theta(\ss,H) \cong \bigoplus_{\alpha \in I(H) \setminus \Phi^+_\Theta} \C_{-w(\alpha)}\qquad\text{as $T$-modules}.
\end{align}
Since the odd degree cohomology groups of $\Hess_\Theta(\ss,H)$ vanish by Lemma~\ref{lemma:partial regular semisimple property}, the restriction map 
\begin{align*}
H^*_T(\Hess_\Theta(\ss,H)) \rightarrow H^*_T(\Hess_\Theta(\ss,H)^T) = \bigoplus_{\overline{w} \in W/W_\Theta} H^*_T(\overline{w}) = \Map(W/W_\Theta,H^*(BT))
\end{align*}
is injective. We may regard $H^*_T(\Hess_\Theta(\ss,H))$ as an $H^*(BT)$-subalgebra of $\Map(W/W_\Theta,H^*(BT))$. Similarly to Lemma~\ref{lemm:GKM_regular_semisimple}, the following lemma follows from \cite[Theorems 1.1 and 2.4]{GHZ} for $G/P$ and \eqref{eq:partial regular semisimple tangent space 2} by applying the GKM theory to $\Hess_\Theta(\ss,H)$. 

\begin{lemma} \label{lemma:partial regular semisimple GKM}
Let $\ss \in \g$ be a regular semisimple element in $\t$ and $H \subset \g$ a $\p$-Hessenberg space. 
Then the image of the restriction map $H^*_T(\Hess_\Theta(\ss,H)) \hookrightarrow \Map(W/W_\Theta,H^*(BT))$ is given by 
\begin{equation} \label{eq:GKMHessTheta(s,H)}
\left\{ f \in \Map(W/W_\Theta,H^*(BT)) \mid
f(\overline{w})-f(\overline{v}) \in (w(\alpha)) \ {\rm if} \ \overline{v}=\overline{ws_\alpha} \ {\rm for \ some}\ \alpha \in I(H) \setminus \Phi^+_\Theta
\right\}
\end{equation}
where $w(\alpha)$ is regarded as an element of $H^2(BT)$ through the isomorphism \eqref{eq:identifyHBT} and $(w(\alpha))$ denotes the ideal in $H^*(BT)$ generated by $w(\alpha)$ as before. 
\end{lemma}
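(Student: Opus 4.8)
The plan is to derive Lemma~\ref{lemma:partial regular semisimple GKM} by the same GKM-theoretic reasoning that established Lemma~\ref{lemm:GKM_regular_semisimple}, transferring the input data from $G/B$ to $G/P$. First I would invoke \cite[Theorems~1.1 and 2.4]{GHZ} for the partial flag variety $G/P$: this says that $G/P$ with the natural left $T$-action is a GKM manifold whose GKM graph $\Gamma_\Theta$ has vertex set $(G/P)^T\cong W/W_\Theta$, with two cosets $\overline{v},\overline{w}$ joined by an edge precisely when $\overline{v}=\overline{ws_\alpha}$ for some $\alpha\in\Phi$ (we may take $\alpha\in\Phi^+$), the label on that edge being $w(\alpha)$ up to sign. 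As in the proof of Lemma~\ref{lemm:GKM_regular_semisimple}, such an edge corresponds to the closure of a $1$-dimensional $T$-orbit, a $\C P^1$ joining the two fixed points, and its weight $w(\alpha)$ identifies the corresponding weight space in the tangent space $T_{\overline{w}}(G/P)$ as $\C_{-w(\alpha)}$.

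Next I would compare this with the tangent-space computation for $\Hess_\Theta(\ss,H)$ recorded in \eqref{eq:partial regular semisimple tangent space 2}, namely $T_{\overline{w}}\Hess_\Theta(\ss,H)\cong\bigoplus_{\alpha\in I(H)\setminus\Phi^+_\Theta}\C_{-w(\alpha)}$. This shows that $\Hess_\Theta(\ss,H)$, which sits $T$-equivariantly inside $G/P$ and has the same $T$-fixed point set $W/W_\Theta$, is a GKM subspace of $G/P$: its GKM graph $\Gamma_\Theta(H)$ has the full vertex set $W/W_\Theta$, and $\overline{v},\overline{w}$ lie on a common edge if and only if $\overline{v}=\overline{ws_\alpha}$ for some $\alpha\in I(H)\setminus\Phi^+_\Theta$, with edge label $w(\alpha)$. (Here one should note, exactly as in the full-flag case, that the odd cohomology of $\Hess_\Theta(\ss,H)$ vanishes by Lemma~\ref{lemma:partial regular semisimple property}(2), which is what guarantees equivariant formality and hence injectivity of the restriction map to the fixed points, so that the GKM description is valid.) Applying the GKM theorem then gives exactly the description \eqref{eq:GKMHessTheta(s,H)} of the image of $H^*_T(\Hess_\Theta(\ss,H))$ inside $\Map(W/W_\Theta,H^*(BT))$.

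The one point requiring a little care — and the likely main obstacle — is the bookkeeping around the coset representatives: the condition $\overline{v}=\overline{ws_\alpha}$ and the label $w(\alpha)$ must be checked to be independent (up to the relevant equivalences) of the choice of representative $w$ of $\overline{w}$, and one must verify that the edges of $\Gamma_\Theta(H)$ are precisely those $\alpha\in\Phi^+$ that survive after quotienting by $\Phi^+_\Theta$, i.e.\ that the orbits lost in passing from $G/B$ to $G/P$ are exactly those labelled by roots in $\Phi_\Theta$. Both facts are already implicit in \cite[Theorems~1.1 and 2.4]{GHZ} applied to $G/P$ together with the isomorphism $T_{\overline{w}}\Hess_\Theta(\ss,H)\cong zH/z\p$ of Lemma~\ref{lemm:partial_regular semisimple tangent space}, so the argument is genuinely parallel to Lemma~\ref{lemm:GKM_regular_semisimple} and no new idea is needed; the proof can therefore be kept to a couple of sentences citing those two ingredients.
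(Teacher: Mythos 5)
Your proposal is correct and follows essentially the same route as the paper: the authors likewise derive the lemma by citing \cite[Theorems~1.1 and 2.4]{GHZ} for $G/P$ together with the tangent-space computation \eqref{eq:partial regular semisimple tangent space 2} and applying GKM theory to $\Hess_\Theta(\ss,H)$, with the vanishing of odd cohomology from Lemma~\ref{lemma:partial regular semisimple property} guaranteeing the validity of the GKM description. Your extra remark on the well-definedness of the edge condition with respect to coset representatives is a reasonable point of care but introduces no new idea beyond the paper's argument.
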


\subsection{Star actions on equivariant cohomology}

Recall in Section~\ref{subsection:star action} that if $H$ is a $\p$-Hessenberg space, then a regular semisimple Hessenberg variety $\Hess(\ss,H)$ admits a $W_\Theta$-action. 
By definition this $W_\Theta$-action commutes with the $T$-action, so we obtain a $W_\Theta$-action on the $T$-equivariant cohomology ring $H^*_T(\Hess(\ss,H))$. 
We also call this action the \emph{star action} of $W_\Theta$ on $H^*_T(\Hess(\ss,H))$, which is denoted by $w \ast f \in H^*_T(\Hess(\ss,H))$ for $w \in W_\Theta$ and $f \in H^*_T(\Hess(\ss,H))$.
Under the identification of $H^*_T(\Hess(\ss,H))$ with the set in \eqref{eq:GKMHess(s,H)}, the star action is given as follows.

\begin{lemma} \label{lemma:star action formula}
Let $\ss \in \g$ be a regular semisimple element in $\t$ and $H \subset \g$ a $\p$-Hessenberg space. 
For $w \in W_\Theta$ and $f \in H^*_T(\Hess(\ss,H))$, we obtain
\begin{align} \label{eq:star action}
(w \ast f)(v) = f(vw) 
\end{align}
for all $v \in W$.
\end{lemma}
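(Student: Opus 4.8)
The plan is to track the $W_\Theta$-action through the GKM restriction map and reduce the statement to the elementary description of the right $W$-action on $T$-fixed points of $G/B$. First I would recall that the star action on $\Hess(\ss,H)$ is by definition the restriction to $W_\Theta$ of the natural right $W$-action on $K/T_K \approx G/B$, and that this action commutes with the left $T$-action; hence it induces an action on $H^*_T(\Hess(\ss,H))$ and, after restricting to the fixed locus, on $H^*_T(\Hess(\ss,H)^T) = \Map(W, H^*(BT))$. The key observation is that the right action of $w \in W_\Theta$ on $\Hess(\ss,H)$ permutes the $T$-fixed points $\Hess(\ss,H)^T \cong W$ (Proposition~\ref{prop:dMPS}(1)), so I need to pin down this permutation precisely.

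The second step is to compute that permutation. Under the identification $(G/B)^T \cong W$, a fixed point $v \in W$ corresponds to the coset $\dot v B$ for a representative $\dot v \in N_G(T)$; writing it as $\dot v T_K \in K/T_K$ and acting on the right by $w = \dot w T_K$ gives $\dot v \dot w T_K$, which is the fixed point $vw \in W$. Thus the right action of $w$ sends the fixed point $v$ to $vw$. Since the induced map on $H^*_T$ of a space is functorial and the identification of $H^*_T(\Hess(\ss,H)^T)$ with $\Map(W,H^*(BT))$ is via evaluation at fixed points, the pullback $(w\ast-)$ on $\Map(W,H^*(BT))$ is precomposition with $v \mapsto vw$, i.e.\ $(w\ast f)(v) = f(vw)$ — except one must check the $H^*(BT)$-coefficient at the fixed point $vw$ is carried to the coefficient at $v$ without an extra Weyl-twist. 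This is exactly where the construction of the $W$-action via $K/T_K$ (rather than an algebraic action) matters: the right translation $r_w \colon K/T_K \to K/T_K$ is $T$-equivariant for the \emph{untwisted} left $T$-action (left and right translations commute), so on each fixed point it induces the identity on $H^*(BT) = H^*_T(\pt)$, with no twist.

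The third step is to assemble these into the formula. Because the restriction $H^*_T(\Hess(\ss,H)) \hookrightarrow \Map(W,H^*(BT))$ is injective (odd cohomology vanishes) and $W_\Theta$-equivariant, and because the action on the target is the substitution $(w\ast f)(v) = f(vw)$ just computed, the same formula holds on $H^*_T(\Hess(\ss,H))$ itself. One should also remark that this substitution indeed preserves the GKM conditions in \eqref{eq:GKMHess(s,H)}: if $v = v's_\alpha$ with $\alpha \in I(H)$ then $vw = (v'w)(w^{-1}s_\alpha w) = (v'w)s_{w^{-1}(\alpha)}$, and since $w \in W_\Theta$ the $\p$-Hessenberg condition ($I(H)$ being a $\Theta$-ideal, or simply $W_\Theta$-stability of $I(H)$) guarantees $w^{-1}(\alpha) \in I(H)$ up to sign, so the edge is preserved; this consistency is implicit in the fact that the star action is well-defined on $H^*_T(\Hess(\ss,H))$, which was already established.

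The main obstacle I expect is the coefficient-twist bookkeeping in the second step: one must argue carefully that, although the right $W$-action on $H^*(G/B)$ is the nontrivial star action (it sends $c_1(L_\chi)$ to $c_1(L_{w(\chi)})$), the action on the \emph{equivariant} cohomology of a single $T$-fixed point is trivial, so that no $w$-twist appears in the arguments of $f$ on the right-hand side of \eqref{eq:star action}. The resolution is that the star action twists the \emph{bundle} $L_\chi$ but the left $T$-action on the base is untouched, and at a fixed point the equivariant parameter is read off from the $T$-representation on the fiber over that point, which is transported isomorphically by $r_w$; equivalently, $r_w$ is a map of $T$-spaces over $BT$, inducing the identity on $H^*(BT)$-module structures. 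Once this point is made cleanly, the formula $(w\ast f)(v) = f(vw)$ is immediate.
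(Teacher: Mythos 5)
Your proposal is correct and follows essentially the same route as the paper: restrict the right $w$-translation to the $T$-fixed points, observe it sends the fixed point $v$ to $vw$ with no twist on the $H^*(BT)$-coefficients (since right translation commutes with the left $T$-action, the induced map on the Borel construction fixes the $ET$-factor), and conclude via the injectivity and equivariance of the restriction map $H^*_T(\Hess(\ss,H))\hookrightarrow \Map(W,H^*(BT))$. Your additional check that the substitution preserves the GKM conditions is harmless but redundant, as the well-definedness of the star action on $H^*_T(\Hess(\ss,H))$ was already established.
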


\begin{proof}
Recall that we identify $\Hess(\ss,H)^T$ with the Weyl group $W$. 
The right action of $w \in W_\Theta$ on $ET \times_T \Hess(\ss,H)$ sends $[\ee,gB]$ to $[\ee,gB \cdot w]$ where $gB \cdot w$ is defined in \eqref{eq:right action}. 
By restricting the action onto the $T$-fixed points, we obtain the map $ET \times_T \Hess(\ss,H)^T \rightarrow ET \times_T \Hess(\ss,H)^T$, which sends $[\ee,v]$ to $[\ee,vw]$.
This induces the following commutative diagram
\begin{center}
\begin{tikzcd}
H^*_T(\Hess(\ss,H)) \arrow[rightarrow,d, "w\ast"'] \arrow[hookrightarrow,r, ""] &[0.5em] H^*_T(\Hess(\ss,H)^T) \arrow[rightarrow,d, "w\ast"] \\
H^*_T(\Hess(\ss,H)) \arrow[hookrightarrow,r, ""] & H^*_T(\Hess(\ss,H)^T). 
\end{tikzcd}
\end{center}
Noting that the rightmost map sends $(f(v))_{v \in W}$ to $(f(vw))_{v \in W}$, we obtain the desired result.
\end{proof}

\begin{remark} \label{remark:sta action trivial BT}
The first projection $ET \times_T \Hess(\ss,H) \rightarrow BT$ is a $W_\Theta$-equivariant map where $W_\Theta$ acts on $BT$ trivially. 
In particular, the star action of $W_\Theta$ on $H^*_T(\Hess(\ss,H))$ is trivial on $H^*(BT)$.
\end{remark}

\begin{proposition} \label{proposition:equivariant}
Let $\ss \in \g$ be a regular semisimple element in $\t$ and $H \subset \g$ a $\p$-Hessenberg space. 
\begin{enumerate}
\item[(1)] The homomorphism $(\pi_H^*)_T: H^*_T(\Hess_\Theta(\ss,H)) \rightarrow H^*_T(\Hess(\ss,H))$ induced from the natural projection $\pi_H: \Hess(\ss,H) \to \Hess_\Theta(\ss,H)$ is injective.
\item[(2)] The image of $(\pi_H^*)_T$ coincides with $H^*_T(\Hess(\ss,H))^{W_\Theta({\rm star})}$, which is the invariants in $H^*_T(\Hess(\ss,H))$ under the star action of $W_\Theta$. 
In other words, the injective homomorphism $(\pi_H^*)_T: H^*_T(\Hess_\Theta(\ss,H)) \hookrightarrow H^*_T(\Hess(\ss,H))$ yields the isomorphism of graded $H^*(BT)$-algebras
\begin{align*}
H^*_T(\Hess_\Theta(\ss,H)) \cong H^*_T(\Hess(\ss,H))^{W_\Theta({\rm star})}.
\end{align*} 
\end{enumerate}
\end{proposition}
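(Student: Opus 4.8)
The plan is to argue entirely on the GKM side, using the explicit descriptions of the images of the restriction maps given in Lemmas~\ref{lemm:GKM_regular_semisimple} and~\ref{lemma:partial regular semisimple GKM}, together with the formula for the star action in Lemma~\ref{lemma:star action formula}. First I would make the map $(\pi_H^*)_T$ explicit on fixed points. The projection $\pi_H$ sends the $T$-fixed point $w \in W = \Hess(\ss,H)^T$ to $\overline{w} \in W/W_\Theta = \Hess_\Theta(\ss,H)^T$, so under the injective restriction maps $H^*_T(\Hess(\ss,H)) \hookrightarrow \Map(W,H^*(BT))$ and $H^*_T(\Hess_\Theta(\ss,H)) \hookrightarrow \Map(W/W_\Theta,H^*(BT))$, the homomorphism $(\pi_H^*)_T$ is simply pullback along $W \twoheadrightarrow W/W_\Theta$: it sends $g \in \Map(W/W_\Theta,H^*(BT))$ to the function $v \mapsto g(\overline{v})$. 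This is visibly injective as a map of modules over $H^*(BT) = \RR$, since $W \to W/W_\Theta$ is surjective; combined with the injectivity of the two restriction maps this proves~(1). It is also a ring homomorphism, so it only remains to identify the image as a subring.

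Next I would identify the image with the star-invariants. By Lemma~\ref{lemma:star action formula}, $(w\ast f)(v) = f(vw)$ for $w \in W_\Theta$ and $v \in W$, so $f \in H^*_T(\Hess(\ss,H))$ is star-invariant under $W_\Theta$ precisely when $f$ is constant on every left coset $vW_\Theta$, i.e.\ when $f$ factors through $W \twoheadrightarrow W/W_\Theta$ as a function $g$. Thus the star-invariant functions are exactly the functions in the image of $\Map(W/W_\Theta,H^*(BT)) \hookrightarrow \Map(W,H^*(BT))$. What must be checked is that such a factoring function $f$ lies in the GKM subalgebra \eqref{eq:GKMHess(s,H)} if and only if the corresponding $g$ lies in the GKM subalgebra \eqref{eq:GKMHessTheta(s,H)} for $\Hess_\Theta(\ss,H)$. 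One direction is automatic: if $g$ satisfies the congruences in \eqref{eq:GKMHessTheta(s,H)} indexed by $\alpha \in I(H) \setminus \Phi_\Theta^+$, then $f = g \circ (\text{quotient})$ satisfies the congruences $f(w) - f(v) \in (w(\alpha))$ whenever $v = ws_\alpha$ and $\alpha \in I(H) \setminus \Phi_\Theta^+$, while for $\alpha \in I(H) \cap \Phi_\Theta^+$ we have $s_\alpha \in W_\Theta$, hence $\overline{ws_\alpha} = \overline{w}$ and $f(w) = f(ws_\alpha)$, so the congruence holds trivially; therefore $f$ lies in \eqref{eq:GKMHess(s,H)} and hence in $H^*_T(\Hess(\ss,H))^{W_\Theta({\rm star})}$. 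This shows the image of $(\pi_H^*)_T$ is contained in the star-invariants.

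For the reverse inclusion, suppose $f \in H^*_T(\Hess(\ss,H))$ is star-invariant; then $f$ factors through $g \in \Map(W/W_\Theta,H^*(BT))$, and I must show $g$ satisfies \eqref{eq:GKMHessTheta(s,H)}. If $\overline{v} = \overline{ws_\alpha}$ for some $\alpha \in I(H) \setminus \Phi_\Theta^+$, pick representatives so that $v = ws_\alpha w'$ for some $w' \in W_\Theta$; since $f$ is constant on $W_\Theta$-cosets, $g(\overline{v}) - g(\overline{w}) = f(ws_\alpha) - f(w) \in (w(\alpha))$ by the edge relation in \eqref{eq:GKMHess(s,H)}, which is exactly the required congruence. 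Hence $g \in H^*_T(\Hess_\Theta(\ss,H))$ and $f = (\pi_H^*)_T(g)$, proving~(2). Finally, $(\pi_H^*)_T$ is an $H^*(BT)$-algebra homomorphism and the star action fixes $H^*(BT)$ pointwise by Remark~\ref{remark:sta action trivial BT}, so the isomorphism $H^*_T(\Hess_\Theta(\ss,H)) \cong H^*_T(\Hess(\ss,H))^{W_\Theta({\rm star})}$ is an isomorphism of graded $H^*(BT)$-algebras. The main point requiring care — the only genuine obstacle — is the bookkeeping in the reverse inclusion: one must choose coset representatives consistently so that an edge relation in the quotient graph is pulled back from, or pushed forward to, a genuine edge relation in the GKM graph of $\Hess(\ss,H)$, using that $s_\alpha \in W_\Theta$ precisely for $\alpha \in \Phi_\Theta$. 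Alternatively, this whole proposition can be deduced non-equivariantly from Theorem~\ref{theorem:main} applied with the $T$-action folded into a Borel construction, but I expect the direct GKM argument above to be cleaner.
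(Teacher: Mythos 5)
Your proposal is correct and follows essentially the same route as the paper: the paper's proof also passes to the commutative diagram of restriction maps to $T$-fixed points, notes that $(\pi_H^*)_T$ becomes pullback along $W\twoheadrightarrow W/W_\Theta$ (hence injective), and identifies the image with the star-invariants via \eqref{eq:GKMHess(s,H)}, \eqref{eq:GKMHessTheta(s,H)}, and Lemma~\ref{lemma:star action formula}. You merely spell out the coset and edge-relation bookkeeping that the paper leaves implicit, and that bookkeeping is carried out correctly.
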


\begin{proof}
Consider the following commutative diagram 
\begin{center}
\begin{tikzcd}
H^*_T(\Hess_\Theta(\ss,H)) \arrow[rightarrow,d, "(\pi_H^*)_T"'] \arrow[hookrightarrow,r, ""] &[0.5em] H^*_T(\Hess_\Theta(\ss,H)^T) = \Map(W/W_\Theta,H^*(BT)) \arrow[rightarrow,d, "p"] \\
H^*_T(\Hess(\ss,H)) \arrow[hookrightarrow,r, ""] & H^*_T(\Hess(\ss,H)^T) = \Map(W,H^*(BT)) 
\end{tikzcd}
\end{center}
where the horizontal arrows are the restriction maps and the rightmost map $p$ is defined by $p(f)(w) = f(\overline{w})$ for $f \in \Map(W/W_\Theta,H^*(BT))$. 
Since $p$ is injective, $(\pi_H^*)_T$ is also injective.
We also see from \eqref{eq:GKMHess(s,H)}, \eqref{eq:GKMHessTheta(s,H)}, and Lemma~\ref{lemma:star action formula} that the image of $(\pi_H^*)_T$ coincides with $H^*_T(\Hess(\ss,H))^{W_\Theta({\rm star})}$, as desired.
\end{proof}

\subsection{Dot actions on equivariant cohomology}

Tymoczko introduced in \cite{Tym08} a $W$-action on the $T$-equivariant cohomology of regular semisimple Hessenberg varieties by using the GKM description.
This is called the dot action. 
In this section we briefly explain Tymoczko's dot action.

We first define the \emph{dot action} of $W$ on $\Map(W,H^*(BT))$ by the following formula:
\begin{align} \label{eq:dot action}
(w \cdot f)(v) = w(f(w^{-1}v)) \ \ \ {\rm for} \ w, v \in W \ {\rm and} \ f \in \Map(W,H^*(BT)). 
\end{align}
Here, we note that $W$ naturally acts on $H^*(BT)$ by the identification in \eqref{eq:identifyHBT}, namely $w(c_1^T(\C_\alpha)) = c_1^T(\C_{w(\alpha)})$. 
The star action on $H^*(BT)$ is trivial (see Remark~\ref{remark:sta action trivial BT}), while the dot action on $H^*(BT)$ is non-trivial.
Let $\ss \in \g$ be a regular semisimple element in $\t$ and $H \subset \g$ a Hessenberg space. 
Recall that the $T$-equivariant cohomology ring of the regular semisimple Hessenberg variety $\Hess(\ss,H)$ is given by \eqref{eq:GKMHess(s,H)}.
Then the dot action of $W$ defined in \eqref{eq:dot action} preserves the set in \eqref{eq:GKMHess(s,H)} (cf. \cite[Lemma~8.7]{AHMMS}). 
In other words, the dot action of $W$ is defined on the $T$-equivariant cohomology ring $H^*_T(\Hess(\ss,H))$. 
Since the ordinary cohomology of $\Hess(\ss,H)$ is isomorphic to the quotient ring $H^*_T(\Hess(\ss,H))/(H^{>0}(BT))$ where $(H^{>0}(BT))$ is the ideal of $H^*_T(\Hess(\ss,H))$ generated by the positive degree part of $H^*(BT)$, the dot action of $W$ on $H^*_T(\Hess(\ss,H))$ induces a $W$-action on $H^*(\Hess(\ss,H))$. 
This is also called the dot action of $W$ on $H^*(\Hess(\ss,H))$. 

\begin{lemma} \label{lemma:stardotcommute}
Let $\ss \in \g$ be a regular semisimple element in $\t$ and $H \subset \g$ a $\p$-Hessenberg space.
Then, the star action of $W_\Theta$ commutes with the dot action of $W$ on $H^*_T(\Hess(\ss,H))$ (and $H^*(\Hess(\ss,H))$).
\end{lemma}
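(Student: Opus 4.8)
The plan is to verify that the two actions commute at the level of the GKM description, i.e. on $\Map(W, H^*(BT))$, and then observe that this descends to ordinary cohomology. Both the star action (Lemma~\ref{lemma:star action formula}) and the dot action (formula~\eqref{eq:dot action}) are given by explicit formulas on $\Map(W, H^*(BT))$, and the embedding $H^*_T(\Hess(\ss,H)) \hookrightarrow \Map(W,H^*(BT))$ is equivariant for both, so it suffices to check that the formulas commute as endomorphisms of $\Map(W, H^*(BT))$.

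First I would fix $w \in W_\Theta$ and $u \in W$ and compute both composites applied to an arbitrary $f \in \Map(W, H^*(BT))$ at an arbitrary vertex $v \in W$. On one side, $\bigl(u \cdot (w \ast f)\bigr)(v) = u\bigl((w\ast f)(u^{-1}v)\bigr) = u\bigl(f(u^{-1}vw)\bigr)$, using~\eqref{eq:dot action} and then~\eqref{eq:star action}. On the other side, $\bigl(w \ast (u \cdot f)\bigr)(v) = (u\cdot f)(vw) = u\bigl(f(u^{-1}vw)\bigr)$, using~\eqref{eq:star action} and then~\eqref{eq:dot action}. These agree for every $v$, so $u \cdot (w \ast f) = w \ast (u \cdot f)$; this is really just the associativity of the left and right regular actions of $W$ on itself, combined with the fact that the dot action twists the coefficient $H^*(BT)$ by $u$ while the star action leaves $H^*(BT)$ untouched (Remark~\ref{remark:sta action trivial BT}), so there is no interference between the coefficient twist and the reindexing. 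Since both actions preserve the subalgebra $H^*_T(\Hess(\ss,H))$ of $\Map(W,H^*(BT))$, the commutation holds there.

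To pass to ordinary cohomology, I would recall that $H^*(\Hess(\ss,H)) \cong H^*_T(\Hess(\ss,H))/(H^{>0}(BT))$ and that both actions are defined on the quotient precisely because each preserves the ideal $(H^{>0}(BT))$: the dot action because it maps $H^{>0}(BT)$ to itself via the $W$-action on $H^*(BT)$, and the star action because it acts trivially on $H^*(BT)$. Hence the commuting endomorphisms on $H^*_T(\Hess(\ss,H))$ induce commuting endomorphisms on the quotient $H^*(\Hess(\ss,H))$, which is the assertion.

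\textbf{Main obstacle.} There is no serious obstacle; the computation is a one-line manipulation of the defining formulas. The only points requiring care are bookkeeping ones: making sure the variable used for the dot action ($u \in W$, acting from the left on the domain and on the coefficients) is kept distinct from the variable used for the star action ($w \in W_\Theta$, acting by right multiplication on the domain only), and explicitly invoking that the dot action indeed preserves the GKM set~\eqref{eq:GKMHess(s,H)} (already recorded in the text via \cite[Lemma~8.7]{AHMMS}) and that the star action does too (which follows since it preserves $H^*_T(\Hess(\ss,H))$ by construction). With those observations in place the proof is immediate.
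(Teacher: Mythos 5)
Your proof is correct and follows essentially the same route as the paper: a direct computation with the explicit formulas \eqref{eq:star action} and \eqref{eq:dot action} on the GKM description, showing both composites send $f$ to $v \mapsto u\bigl(f(u^{-1}vw)\bigr)$ (the paper uses the opposite letter convention but the computation is identical). Your extra remarks on descending to ordinary cohomology are accurate, though the paper leaves that step implicit.
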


It follows from Lemma~\ref{lemma:stardotcommute} that the star action of $W_\Theta$ on $H^*(\Hess(\ss,H))$ preserves the invariants $H^*(\Hess(\ss,H))^{W({\rm dot})}$ under the dot action of $W$. 
Conversely, the dot action of $W$ on $H^*(\Hess(\ss,H))$ preserves the invariants $H^*(\Hess(\ss,H))^{W_\Theta({\rm star})}$ under the star action of $W_\Theta$. 

\begin{proof}[Proof of Lemma~\ref{lemma:stardotcommute}]
Let $w \in W, u \in W_\Theta$ and $f \in H^*_T(\Hess(\ss,H))$.
By using \eqref{eq:star action} and \eqref{eq:dot action}, we have 
\begin{align*}
&(u \ast (w \cdot f))(v) = (w \cdot f)(vu) = w(f(w^{-1}vu)) \ {\rm and} \\
&(w \cdot (u \ast f))(v) = w((u \ast f)(w^{-1}v)) = w(f(w^{-1}vu)) \ {\rm for \ all} \ v \in W. 
\end{align*}
Hence, the equality $(u \ast (w \cdot f)) = (w \cdot (u \ast f))$ holds.
\end{proof}

Let $H \subset \g$ be a $\p$-Hessenberg space.
One can define the dot action of $W$ on the (equivariant and ordinary) cohomology of a regular semisimple partial Hessenberg variety $\Hess_\Theta(\ss,H)$ by a similar discussion.
In fact, we define the \emph{dot action} of $W$ on $\Map(W/W_\Theta,H^*(BT))$ by 
\begin{align*} 
(w \cdot f)(\overline{v}) = w(f(\, \overline{w^{-1}v} \, )) \ \ \ {\rm for} \ w \in W, \overline{v} \in W/W_\Theta, \ {\rm and} \ f \in \Map(W/W_\Theta,H^*(BT)). 
\end{align*}
By a similar argument of \cite[Lemma~8.7]{AHMMS}, the dot action above preserves the set \eqref{eq:GKMHessTheta(s,H)}.
Hence we obtain the dot action of $W$ on $H^*_T(\Hess_\Theta(\ss,H))$, which induces that of $W$ on $H^*(\Hess_\Theta(\ss,H))$. 

\begin{proposition} \label{proposition:dotaction starinvariants}
Let $\ss \in \g$ be a regular semisimple element and $H \subset \g$ a $\p$-Hessenberg space.
Then the isomorphism in \eqref{eq:main} for $\xx=\ss$ gives an isomorphism as $W$-modules with respect to the dot action of $W$. 
In particular, we have 
\begin{align} \label{eq:dotaction starinvariants}
H^*(\Hess(\ss,H)) \cong H^*(\Hess(\ss,H))^{W_\Theta({\rm star})} \otimes H^*(P/B) \ \ \ \textrm{as $W$-modules},
\end{align} 
where $H^*(\Hess(\ss,H))^{W_\Theta({\rm star})}$ denotes the invariants under the star action of $W_\Theta$ and the action of $W$ on $H^*(P/B)$ is trivial. 
\end{proposition}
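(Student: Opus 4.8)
The plan is to upgrade the ring isomorphism of Theorem~\ref{theorem:main} to a $W$-module isomorphism for the dot action, by checking that every map used in its proof is dot-equivariant at the equivariant-cohomology level and then passing to ordinary cohomology. First I would observe that the dot action of $W$ is defined on $H^*_T(\Hess(\ss,H))$ and on $H^*_T(\Hess_\Theta(\ss,H))$ via the GKM descriptions \eqref{eq:GKMHess(s,H)} and \eqref{eq:GKMHessTheta(s,H)}, and that by Lemma~\ref{lemma:stardotcommute} it commutes with the star action of $W_\Theta$. Hence the dot action of $W$ preserves the $W_\Theta$-star-invariants $H^*_T(\Hess(\ss,H))^{W_\Theta({\rm star})}$, and by Proposition~\ref{proposition:equivariant} the equivariant projection $(\pi_H^*)_T\colon H^*_T(\Hess_\Theta(\ss,H))\to H^*_T(\Hess(\ss,H))$ identifies its source with exactly this subring. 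The key point is that $(\pi_H^*)_T$ is itself dot-equivariant: on GKM data it is the pullback along $p\colon\Map(W/W_\Theta,H^*(BT))\hookrightarrow\Map(W,H^*(BT))$, $p(f)(w)=f(\overline w)$, and a direct check against the two dot-action formulas (both of the shape $(w\cdot f)(v)=w(f(w^{-1}v))$, compatibly with $W\twoheadrightarrow W/W_\Theta$) shows $p$ intertwines them. So $H^*_T(\Hess_\Theta(\ss,H))\cong H^*_T(\Hess(\ss,H))^{W_\Theta({\rm star})}$ as $W$-modules for the dot action, and quotienting by the ideal $(H^{>0}(BT))$ — which is dot-stable, since that is how the dot action on ordinary cohomology is defined — gives $H^*(\Hess_\Theta(\ss,H))\cong H^*(\Hess(\ss,H))^{W_\Theta({\rm star})}$ as $W$-modules for the dot action. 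This is precisely the isomorphism \eqref{eq:main} for $\xx=\ss$, now promoted to a dot-equivariant statement, which is the first assertion of the proposition.

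For the tensor decomposition \eqref{eq:dotaction starinvariants}, I would return to the Leray--Hirsch isomorphism $\varphi\colon H^*(P/B)\otimes_\Q H^*(\Hess_\Theta(\ss,H))\xrightarrow{\sim}H^*(\Hess(\ss,H))$ from the proof of Theorem~\ref{theorem:main}, which sends $\alpha\otimes\beta$ to $s_H(\alpha)\cdot\pi_H^*(\beta)$. Combining it with the $W_\Theta$-star-invariant identification $\pi_H^*\colon H^*(\Hess_\Theta(\ss,H))\xrightarrow{\sim}H^*(\Hess(\ss,H))^{W_\Theta({\rm star})}$ just established, we get an isomorphism of graded $\Q$-vector spaces $H^*(P/B)\otimes_\Q H^*(\Hess(\ss,H))^{W_\Theta({\rm star})}\xrightarrow{\sim}H^*(\Hess(\ss,H))$. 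It remains to see this is $W$-equivariant for the dot action when $W$ acts trivially on $H^*(P/B)$. The second factor maps in by $\pi_H^*$, which we showed is dot-equivariant; the subtle factor is the section term $s_H(\alpha)=j_H^*(s(\alpha))$ with $s$ the $W_\Theta$-equivariant section of Lemma~\ref{lemma:section_equivariant}. Here I would argue that $s(\alpha)\in H^*(G/B)$ can be taken in the image of $\pi^*\colon H^*(G/P)\to H^*(G/B)$ only after the quotient — more carefully, the point is that the dot action on $H^*(\Hess(\ss,H))$ restricts on the subring $j_H^*(\pi^*H^*(G/P))$; in fact the cleaner route is: $H^*(\Hess(\ss,H))$ is a free module over the subring $A:=\pi_H^*H^*(\Hess_\Theta(\ss,H))$ on the basis $\{s_H(\alpha)\}$, and since $A$ is a dot-subrepresentation with $H^*(\Hess(\ss,H))\cong A\otimes_A H^*(\Hess(\ss,H))$, one identifies the dot action on the complementary span of the $s_H(\alpha)$. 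I expect the honest way to finish is to show the span $S:=s_H(H^*(P/B))\subset H^*(\Hess(\ss,H))$ is dot-stable and carries the trivial dot action; this follows because $S$ maps isomorphically, via $j_H^*$ composed with Borel's presentation \eqref{eq:Borel}, onto the image of $\RR(\Theta)/(\RR(\Theta)^{W_\Theta}_+)$ sitting inside $\RR/(\RR^W_+)$, and the dot action of $W$ on $H^2_T(\Hess(\ss,H))$ reduces mod $(H^{>0}(BT))$ to the \emph{star} (not dot) action on $H^2(G/B)=\t^*_\Q/(\RR^W_+)_2$ — so the corresponding degree-two generators are dot-fixed, hence all of $S$ is.

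The main obstacle, and the step I would spend the most care on, is exactly this last identification: disentangling the dot action from the star action on the "base-flag" classes $s_H(\alpha)$. The dot action genuinely moves $H^*(BT)$ around, so it is not a priori obvious that the section classes $j_H^*(c_1(L_{\varpi_i}))$ for $\alpha_i\in\Theta$ are dot-invariant; one must use that the dot action on $H^*(\Hess(\ss,H))$ comes from the equivariant dot action whose reduction mod $(H^{>0}(BT))$ kills the $H^*(BT)$-twist, leaving on degree-two classes coming from $G/B$ the residual star action, under which the $\Theta$-part is visibly fixed (as in the computation $s_j(\varpi_i)=\varpi_i-\delta_{ij}\alpha_j$ already used before \eqref{eq:iota}). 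Once that is in place, $\varphi$ is a map of dot $W$-modules, $H^*(P/B)$ carries the trivial action, and \eqref{eq:dotaction starinvariants} follows. A cleaner alternative I would also consider is to avoid $\varphi$ entirely in the equivariant setting: show directly that $H^*_T(\Hess(\ss,H))\cong H^*_T(\Hess_\Theta(\ss,H))\otimes_{H^*(BT)}H^*(P/B)$ as dot $W$-modules with $W$ acting trivially on the last factor, using that the GKM ring of $\Hess(\ss,H)$ is, fibrewise over $W/W_\Theta$, a copy of $H^*_T(P/B)=H^*(BT)\otimes H^*(P/B)$ with the $W_\Theta$-diagonal action — the dot action permuting the $W/W_\Theta$-fibres and twisting scalars — and then reduce mod $(H^{>0}(BT))$; this makes the triviality of the dot action on the $H^*(P/B)$-factor manifest and sidesteps the section altogether.
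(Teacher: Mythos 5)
Your overall architecture matches the paper's: establish dot-equivariance of $\pi_H^*$ via the GKM description, invoke Lemma~\ref{lemma:stardotcommute} so that the star-invariants form a dot-submodule, and then upgrade the Leray--Hirsch isomorphism $\varphi(\alpha\otimes\beta)=s_H(\alpha)\cdot\pi_H^*(\beta)$ to a dot-equivariant one by showing the section classes are dot-fixed. The first half of your argument (dot-equivariance of $(\pi_H^*)_T$ on GKM data and descent to ordinary cohomology) is correct and is essentially the paper's argument written out in more detail.

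The gap is in your justification of the decisive step, the dot-invariance of $S=s_H(H^*(P/B))$. You claim that the dot action on degree-two classes coming from $G/B$ reduces, mod $(H^{>0}(BT))$, to the \emph{star} action, and that the relevant classes are star-fixed by the computation $s_j(\varpi_i)=\varpi_i-\delta_{ij}\alpha_j$. Both halves of this are false. First, the residual dot action on $H^*(G/B)$ is \emph{trivial}, not the star action: in $H^*_T(G/B)\subset\Map(W,H^*(BT))$ the class $c_1^T(L_\chi)$ restricts at a fixed point $v$ to $v(\chi)$, so $(w\cdot c_1^T(L_\chi))(v)=w\bigl((w^{-1}v)(\chi)\bigr)=v(\chi)$; it is dot-invariant already at the equivariant level, whereas the star action sends $c_1(L_\chi)$ to $c_1(L_{w(\chi)})$ and is genuinely nontrivial. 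Second, the image of the section $s$ of Lemma~\ref{lemma:section_equivariant} is generated in degree two by the classes $\alpha_i$ with $\alpha_i\in\Theta$ (not by the $\varpi_i$ with $\alpha_i\notin\Theta$, which is what the displayed computation controls), and these are not star-fixed, since $s_i\ast\alpha_i=-\alpha_i$. So if your identification ``residual dot action $=$ star action'' were correct, $S$ would fail to carry the trivial action and the tensor decomposition would not follow. The repair is exactly the fact the paper uses (citing \cite[Remark~8.9]{AHMMS}): the dot action of $W$ on all of $H^*(G/B)$ is trivial, by the computation above, and $j_H^*$ is dot-equivariant, so $s_H=j_H^*\circ s$ lands in the dot-invariants of $H^*(\Hess(\ss,H))$. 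With that substitution your proof closes and coincides with the paper's; your sketched ``cleaner alternative'' via the fibrewise GKM description would also work but is not carried out.
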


\begin{proof}
As seen in the proof of Theorem~\ref{theorem:main}, one can construct an isomorphism of $W_\Theta$-modules with respect to the star action of $W_\Theta$ by
\begin{align} \label{eq:rhoregsemi}
\varphi: H^*(P/B) \otimes_\Q H^*(\Hess_\Theta(\ss,H)) \cong H^*(\Hess(\ss,H)). 
\end{align}
In what follows, we use the notations appeared in the proof of Theorem~\ref{theorem:main}.
Recall that $\varphi$ is defined by $\varphi(\sum \alpha \otimes \beta) = \sum s_H(\alpha) \cdot \pi_H^*(\beta)$. 

In this proof, the $W$-action means the dot action. 
We now show that the map $\varphi$ also gives an isomorphism of $W$-modules. 
By the definition of dot actions, the homomorphism $(\pi_H^*)_T: H^*_T(\Hess_\Theta(\ss,H)) \rightarrow H^*_T(\Hess(\ss,H))$ is a $W$-equivariant map. 
Thus, $\pi_H^*: H^*(\Hess_\Theta(\ss,H)) \rightarrow H^*(\Hess(\ss,H))$ is also a $W$-equivariant map.
By the definition of dot actions, the homomorphism $j_H^*: H^*(G/B) \rightarrow H^*(\Hess(\ss,H))$ is a $W$-equivariant map.
One can also see that the dot action of $W$ on $H^*(G/B)$ is trivial (cf. \cite[Remark~8.9]{AHMMS}), so $s_H$ is a $W$-equivariant map.
Therefore, we conclude that $\varphi$ also gives an isomorphism of $W$-modules. 

It follows from Lemma~\ref{lemma:stardotcommute} that the dot action of $W$ on $H^*(\Hess(\ss,H))$ preserves the invariants $H^*(\Hess(\ss,H))^{W_\Theta({\rm star})}$. 
By taking the invariant part of both sides in \eqref{eq:rhoregsemi} under the star action of $W_\Theta$, the homomorphism $\pi_H^*$ yields an isomorphism $H^*(\Hess_\Theta(\ss,H)) \cong H^*(\Hess(\ss,H))^{W_\Theta({\rm star})}$, as discussed in the proof of Theorem~\ref{theorem:main}. 
Since $\varphi$ is an isomorphism of $W$-modules, the isomorphism $H^*(\Hess_\Theta(\ss,H)) \cong H^*(\Hess(\ss,H))^{W_\Theta({\rm star})}$ also gives an isomorphism of $W$-modules, as desired. 
Applying this isomorphism for the isomorphism \eqref{eq:rhoregsemi} yields the isomorphism \eqref{eq:dotaction starinvariants} of $W$-modules.
This completes the proof.
\end{proof}

\begin{remark} \label{remark:dot action trivial}
The dot action of $W$ on $H^*(G/P)$ is trivial because $\pi^*: H^*(G/P) \hookrightarrow H^*(G/B)$ is a $W$-equivariant map with respect to the dot action and the dot action of $W$ on $H^*(G/B)$ is trivial, as seen in the proof of Proposition~\ref{proposition:dotaction starinvariants}.
\end{remark}

\section{Cohomology rings of regular partial Hessenberg varieties} \label{sect:cohomology of regular}

An element $\yy \in \g$ is \emph{regular} if the centralizer of $\yy$ in $G$ has dimension $n$. 
Recall that $E_\alpha$ is a fixed basis of the root space $\g_\alpha$ for a root $\alpha \in \Phi$.
Any regular element $\yy \in \g$ is conjugate to an element of the form $\yy_\Xi = \ss_\Xi + \nn_\Xi$ for some $\Xi \subset \Delta$ where $\ss_\Xi$ is semisimple in $\t$ and $\nn_\Xi$ is nilpotent so that $\nn_\Xi = \sum_{\alpha \in \Xi} E_\alpha$ and the centralizer $Z_\g(\ss_\Xi)$ of $\ss_\Xi$ in $\g$ is $Z_\g(\ss_\Xi) = \g(\Xi) \oplus Z$. 
Here, $\g(\Xi)$ is the complex semisimple Lie algebra corresponding to the root system $\Phi_\Xi$, and $Z \subset \t$ is the center of $Z_\g(\ss_\Xi)$ (cf. \cite[Section~2]{AFZ}). 
We call such an element $\yy \in \g$ a \emph{regular element associated to $\Xi \subset \Delta$}. 
A partial Hessenberg variety $\Hess_\Theta(\yy,H)$ is called \emph{regular} if $\yy$ is regular.

Let $H$ be a Hessenberg space and $I(H) \subset \Phi^+$ is defined in \eqref{eq:pHessenberg space}. If $I(H)$ contains the set of simple roots $\Delta$, then the regular Hessenberg variety $\Hess(\yy,H)$ is irreducible by \cite[Corollary~14]{Pre18}. 
It is also known that the complex dimension of the regular Hessenberg variety $\Hess(\yy,H)$ is equal to $\dim H/\b$ and the odd degree cohomology groups of regular Hessenberg varieties vanish by \cite[Lemma~2 and Corollary~3]{Pre18}.
Hence, the following results follow from a similar argument of Lemma~\ref{lemma:partial regular semisimple property}, so we omit the proof.

\begin{lemma} 
Let $\yy \in \g$ be a regular element and $H \subset \g$ a $\p$-Hessenberg space. 
Then the followings hold.
\begin{enumerate}
\item[(1)] If $I(H) \supset \Delta$, then $\Hess_\Theta(\yy,H)$ is irreducible.
\item[(2)] The complex dimension of the regular partial Hessenberg variety $\Hess_\Theta(\yy,H)$ is equal to $\dim H/\p$. 
\item[(3)] The odd degree cohomology groups of $\Hess_\Theta(\yy,H)$ vanish. 
\end{enumerate} 
\end{lemma}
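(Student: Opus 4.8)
The plan is to reduce the three statements about the regular \emph{partial} Hessenberg variety $\Hess_\Theta(\yy,H)$ to the corresponding known facts about the regular \emph{full} Hessenberg variety $\Hess(\yy,H)$ by using the fiber bundle of Lemma~\ref{lemma:fiber bundle} and the cohomological splitting of Theorem~\ref{theorem:main}. The recipe is exactly the one already carried out in Lemma~\ref{lemma:partial regular semisimple property}, so I would only sketch the adaptation. Throughout I use the fiber bundle $P/B \xhookrightarrow{\iota_H} \Hess(\yy,H) \xrightarrow{\pi_H} \Hess_\Theta(\yy,H)$.

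For (1), irreducibility, I would argue as follows. Assume $I(H) \supset \Delta$. By \cite[Corollary~14]{Pre18} the regular Hessenberg variety $\Hess(\yy,H)$ is irreducible, and $P/B$ is a flag variety, hence irreducible. Since $\pi_H$ is a fiber bundle (in particular an open surjection) with irreducible total space, its image $\Hess_\Theta(\yy,H)$ is irreducible; alternatively, $\pi_H$ is a continuous surjection from an irreducible space, so the image is irreducible. For (2), the dimension count: by \cite[Lemma~2]{Pre18} we have $\dim_\C \Hess(\yy,H) = \dim H/\b$, and the fiber $P/B$ has complex dimension $\dim \p/\b$; since $\pi_H$ is a fiber bundle, $\dim_\C \Hess_\Theta(\yy,H) = \dim_\C \Hess(\yy,H) - \dim_\C P/B = \dim H/\b - \dim \p/\b = \dim H/\p$, using that $\b \subset \p \subset H$. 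For (3), vanishing of odd cohomology: by \cite[Corollary~3]{Pre18} the odd degree cohomology of $\Hess(\yy,H)$ vanishes, and the same is classically true for $P/B$. Theorem~\ref{theorem:main}(1) gives an isomorphism of graded vector spaces $H^*(\Hess(\yy,H)) \cong H^*(P/B) \otimes_\Q H^*(\Hess_\Theta(\yy,H))$; since the left side and the first tensor factor on the right are concentrated in even degrees, and $H^0(P/B) = \Q$, the factor $H^*(\Hess_\Theta(\yy,H))$ must itself be concentrated in even degrees. This is the cleanest route; equivalently one could invoke the Leray--Hirsch degeneration directly.

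I do not anticipate a genuine obstacle here: all three parts are formal consequences of already-established tools, and the excerpt already signals that the proof is omitted because it mirrors Lemma~\ref{lemma:partial regular semisimple property}. The one point that requires a word of care is that $\Hess(\yy,H)$ need not be connected or smooth when $I(H) \not\supset \Delta$, so in parts (2) and (3) one should be slightly careful that the statements are about each connected component or, better, deduce them purely from the graded vector space isomorphism of Theorem~\ref{theorem:main}(1), which makes no connectedness assumption. Thus the safest writeup proves (3) from the tensor decomposition, proves (2) from the fiber bundle structure together with \cite[Lemma~2]{Pre18}, and proves (1) from \cite[Corollary~14]{Pre18} together with the surjectivity of $\pi_H$.
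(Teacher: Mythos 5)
Your proposal is correct and follows exactly the route the paper intends: the paper omits the proof, stating that the three claims follow from the cited facts of Precup for the full regular Hessenberg variety $\Hess(\yy,H)$ by the same fiber-bundle argument used in Lemma~\ref{lemma:partial regular semisimple property}, which is precisely what you carry out (surjectivity of $\pi_H$ for irreducibility, the fiber dimension count for the dimension, and the Leray--Hirsch tensor decomposition of Theorem~\ref{theorem:main}(1) for the vanishing of odd cohomology). Your added remark about deducing (3) purely from the graded vector-space isomorphism, avoiding any smoothness or connectedness issues, is a sensible precaution consistent with the paper's argument.
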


\begin{remark}
If $\yy$ is a regular nilpotent element $\nn$, then $\Hess_\Theta(\nn,H)$ is irreducible for arbitrary $\p$-Hessenberg space $H$ (not necessarily $I(H) \supset \Delta$) (cf. \cite[Proposition~4.4]{Hor24}). 
\end{remark}

We now briefly explain a work of B$\breve{a}$libanu and Crooks in \cite{BaCr24}.
We also refer the reader to \cite[Section~5]{GolSin}.
For a Hessenberg space $H \subset \g$, we define the vector bundle $G \times_B H$ over the flag variety $G/B$.
Consider a map 
\begin{align} \label{eq:muH}
\mu_H : G \times_B H \rightarrow \g; \ \ \ [g, \vv] \mapsto \Ad(g)(\vv).
\end{align} 
Note that $\mu_H$ denotes the family of Hessenberg varieties since $\mu_H^{-1}(\xx)$ is isomorphic to the Hessenberg variety $\Hess(\xx,H)$ for any $\xx \in \g$. 
For any $\xx \in \g$, there exists a sufficiently small Euclidean ball $D_\xx \subset \g$ centered at $\xx$ such that the inclusion $\Hess(\xx,H) \cong \mu_H^{-1}(\xx) \hookrightarrow \mu_H^{-1}(D_\xx)$ induces an isomorphism 
\begin{align} \label{eq:iso_open_ball}
H^*(\mu_H^{-1}(D_\xx)) \cong H^*(\mu_H^{-1}(\xx)) \cong H^*(\Hess(\xx,H)).
\end{align}
Let $\g^{\reg}$ (resp. $\g^{\regsemi}$) be the set of regular elements (resp. regular semisimple elements) in $\g$. 
Fix a regular element $\yy \in \g^{\reg}$ associated to $\Xi \subset \Delta$. 
Take a regular semisimple element $\ss$ in $D_\yy$. 
Composing the induced map from the inclusion $\Hess(\ss,H) \cong \mu_H^{-1}(\ss) \hookrightarrow \mu_H^{-1}(D_\yy \cap \g^{\regsemi}) \hookrightarrow \mu_H^{-1}(D_\yy)$ with the inverse of \eqref{eq:iso_open_ball} gives rise to an isomorphism of graded $\Q$-algebras (\cite[Proposition~4.7]{BaCr24}):
\begin{align} \label{eq:dotinvariant}
\eta_\yy: H^*(\Hess(\yy,H)) \cong H^*(\Hess(\ss,H))^{W_\Xi({\rm dot})}
\end{align}
where the right hand side denotes the invariants in $H^*(\Hess(\ss,H))$ under the dot action of $W_\Xi$. 

\begin{remark} \label{remark:invariant}
Let $\nn \in \g$ be a regular nilpotent element.
If $\yy=\nn$, then the isomorphism \eqref{eq:dotinvariant} of graded $\Q$-algebras in type $A$ was proved in \cite{AHHM} by using an explicit presentation of $H^*(\Hess(\nn,H))$.
The isomorphism \eqref{eq:dotinvariant} as $\Q$-vector spaces for type $A$ was proved in \cite{BroCho}.
The isomorphism \eqref{eq:dotinvariant} of graded $\Q$-algebras for the case when $\yy=\nn$ (in arbitrary Lie types) was proved in \cite{AHMMS} from a point of view of hyperplane arrangements.
We also note that Peterson variety $\Pet_\Phi$ is the special case of regular nilpotent Hessenberg varieties and the toric variety $X_\Phi$ associated with the fan of Weyl chambers is the special case of regular semisimple Hessenberg varieties.
Before the works in \cite{AHHM, AHMMS, BroCho}, an explicit presentation of the cohomology ring for $\Pet_\Phi$ was given by \cite{FHM} in type $A$ and \cite{HHM} for all Lie types, and the $W$-invariants of the cohomology ring of $X_\Phi$ was studied by \cite{Kly}. 
Comparing their presentations, one can see that $H^*(\Pet_\Phi)$ is isomorphic to $H^*(X_\Phi)^{W({\rm dot})}$.
\end{remark}

Combining the isomorphism \eqref{eq:dotinvariant} with Theorem~\ref{theorem:main}, we obtain the following result.

\begin{theorem} \label{theorem:main2} 
Let $\yy \in \g$ be a regular element associated to $\Xi \subset \Delta$ and $\ss \in \g$ a regular semisimple element.
Let $H \subset \g$ be a $\p$-Hessenberg space.
Then there is an isomorphism of graded $\Q$-algebras:
\begin{align*}
H^*(\Hess_\Theta(\yy,H)) \cong H^*(\Hess_\Theta(\ss,H))^{W_\Xi({\rm dot})}
\end{align*}
where the right hand side denotes the invariants in $H^*(\Hess_\Theta(\ss,H))$ under the dot action of $W_\Xi$. 
\end{theorem}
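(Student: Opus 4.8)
The plan is to deduce the theorem from the B$\breve{a}$libanu--Crooks isomorphism \eqref{eq:dotinvariant} together with Theorem~\ref{theorem:main}(3); the new ingredient is that \eqref{eq:dotinvariant} is compatible with the star action of $W_\Theta$. Note first that a $\p$-Hessenberg space is in particular a Hessenberg space, so \eqref{eq:dotinvariant} is available for our $H$. Concretely, I would upgrade \eqref{eq:dotinvariant} to a $W_\Theta$-equivariant isomorphism for the star action, and then pass to $W_\Theta$-invariants on both sides.

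The key observation is that the $W_\Theta$-action underlying the star action extends to the total spaces used to build $\eta_\yy$. For a subset $D\subset\g$, the space $\mu_H^{-1}(D)\subset G\times_B H$ is homeomorphic, via $G/B\approx K/T_K$, to
\[
\{(kT_K,\xx)\in(K/T_K)\times\g \mid \xx\in D,\ \Ad(k^{-1})(\xx)\in H\},
\]
the condition being independent of the chosen representative $k$ since $H$ is $T_K$-stable. For $w=zT_K\in N_K(T_K)/T_K\cong W_\Theta$ I would set $(kT_K,\xx)\cdot w=(kzT_K,\xx)$; this is a well-defined continuous right action (well-definedness in $k$ uses $z^{-1}T_Kz=T_K$). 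Since it fixes the $\g$-coordinate, it preserves $\mu_H^{-1}(D)$ for every $D$ provided $kz\in G_H(\xx)$ whenever $k\in G_H(\xx)$ --- and this holds because $z\in P$ (as $P=BW_\Theta B$) and $G_H(\xx)$ is right $P$-invariant. On each fiber $\mu_H^{-1}(\xx)\cong\Hess(\xx,H)$ this restricts to the $W_\Theta$-action \eqref{eq:right action} that induces the star action, and it is visibly compatible with the inclusions $\mu_H^{-1}(\xx)\hookrightarrow\mu_H^{-1}(D)$ and $\mu_H^{-1}(D')\hookrightarrow\mu_H^{-1}(D)$ for $D'\subset D$.

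Granting this, the isomorphism \eqref{eq:iso_open_ball} for $\xx=\yy$ and the restriction homomorphism attached to $\Hess(\ss,H)=\mu_H^{-1}(\ss)\hookrightarrow\mu_H^{-1}(D_\yy\cap\g^{\regsemi})\hookrightarrow\mu_H^{-1}(D_\yy)$ are both induced by $W_\Theta$-equivariant continuous maps, so $\eta_\yy\colon H^*(\Hess(\yy,H))\xrightarrow{\cong}H^*(\Hess(\ss,H))^{W_\Xi({\rm dot})}$ is $W_\Theta$-equivariant for the star action. Here the star action of $W_\Theta$ preserves $H^*(\Hess(\ss,H))^{W_\Xi({\rm dot})}$ since it commutes with the dot action of $W\supset W_\Xi$ by Lemma~\ref{lemma:stardotcommute}. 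Taking $W_\Theta$-star-invariants, $\eta_\yy$ restricts to a graded $\Q$-algebra isomorphism
\begin{align*}
H^*(\Hess(\yy,H))^{W_\Theta({\rm star})}
&\cong \big(H^*(\Hess(\ss,H))^{W_\Xi({\rm dot})}\big)^{W_\Theta({\rm star})}\\
&= \big(H^*(\Hess(\ss,H))^{W_\Theta({\rm star})}\big)^{W_\Xi({\rm dot})},
\end{align*}
the last equality holding by Lemma~\ref{lemma:stardotcommute}. The left-hand side is $H^*(\Hess_\Theta(\yy,H))$ by Theorem~\ref{theorem:main}(3) applied with $\xx=\yy$, while the right-hand side is $H^*(\Hess_\Theta(\ss,H))^{W_\Xi({\rm dot})}$ by taking $W_\Xi$-dot-invariants in Proposition~\ref{proposition:dotaction starinvariants}, which identifies $H^*(\Hess_\Theta(\ss,H))$ with $H^*(\Hess(\ss,H))^{W_\Theta({\rm star})}$ both as graded $\Q$-algebras and as $W$-modules for the dot action. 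Concatenating these isomorphisms yields the theorem.

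The main obstacle is the extension of the star action to the total spaces $\mu_H^{-1}(D_\yy)$ and $\mu_H^{-1}(D_\yy\cap\g^{\regsemi})$: one must verify carefully that it is a genuine continuous $W_\Theta$-action compatible with all the maps entering the B$\breve{a}$libanu--Crooks construction of $\eta_\yy$, and in particular confirm that \cite[Proposition~4.7]{BaCr24} produces $\eta_\yy$ using only inclusions of the kind listed above. Everything afterwards is formal bookkeeping with invariants, relying on Theorem~\ref{theorem:main}, Proposition~\ref{proposition:dotaction starinvariants}, and the commutation of the star and dot actions (Lemma~\ref{lemma:stardotcommute}).
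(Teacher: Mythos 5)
Your proposal is correct and follows essentially the same route as the paper: the paper likewise extends the star action to the total space via $[k,\vv]\cdot w=[kz,\Ad(z^{-1})(\vv)]$ on $K\times_{T_K}H\approx G\times_BH$ (which preserves $\mu_H^{-1}(\a)$ for every subset $\a\subset\g$, making the inclusions in the B$\breve{a}$libanu--Crooks construction $W_\Theta$-equivariant and hence $\eta_\yy$ star-equivariant), and then performs the same invariant-taking bookkeeping using Theorem~\ref{theorem:main}, Proposition~\ref{proposition:dotaction starinvariants}, and Lemma~\ref{lemma:stardotcommute}. The ``main obstacle'' you flag is resolved in the paper exactly as you propose, so no gap remains.
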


\begin{proof}
We first show that the map $\eta_\yy$ in \eqref{eq:dotinvariant} is an isomorphism of $W_\Theta$-modules with respect to the star action of $W_\Theta$. 
Recall from \eqref{eq:vector bundle homeo} that the homeomorphism $K/T_K \approx G/B$ induces the homeomorphism $K \times_{T_K} H \approx G \times_B H$.
We regard $W_\Theta$ as a subgroup of $N_K(T_K)/T_K (\cong W)$.
Since $H$ is a $\p$-Hessenberg space, one can define a right action of $W_\Theta$ on $K \times_{T_K} H$ by $[k,\vv] \cdot w \coloneqq [kz,\Ad(z^{-1})(\vv)]$ for $[k,\vv] \in K \times_{T_K} H$ and $w=zT_k \in W_\Theta$. 
Thus, we obtain a right action of $W_\Theta$ on $G \times_B H$ via the homeomorphism $K \times_{T_K} H \approx G \times_B H$. 
For any subset $\a \subset \g$, the right action of $W_\Theta$ on $G \times_B H$ preserves $\mu_H^{-1}(\a)$ where $\mu_H$ is defined in \eqref{eq:muH}, so both inclusions 
\begin{align*}
&\Hess(\xx,H) \cong \mu_H^{-1}(\xx) \hookrightarrow \mu_H^{-1}(D_\xx) \ \textrm{and} \\
&\Hess(\ss,H) \cong \mu_H^{-1}(\ss) \hookrightarrow \mu_H^{-1}(D_\yy \cap \g^{\regsemi}) \hookrightarrow \mu_H^{-1}(D_\yy)
\end{align*}
are $W_\Theta$-equivariant maps. 
By the construction of the map $\eta_\yy$ in \eqref{eq:dotinvariant} and Lemma~\ref{lemma:stardotcommute}, we see that $\eta_\yy$ is an isomorphism of $W_\Theta$-modules with respect to the star action of $W_\Theta$. 
Hence, we have the following isomorphism of graded $\Q$-algebras
\begin{align} \label{eq:regularproof1}
H^*(\Hess_\Theta(\yy,H)) \cong H^*(\Hess(\yy,H))^{W_\Theta({\rm star})} \cong \left( H^*(\Hess(\ss,H))^{W_\Xi({\rm dot})} \right)^{W_\Theta({\rm star})}. 
\end{align}
Here, we used Theorem~\ref{theorem:main} for the first isomorphism. 

On the other hand, it follows from Proposition~\ref{proposition:dotaction starinvariants} that the isomorphism in \eqref{eq:main} for $\xx=\ss$, that is, 
$$
H^*(\Hess_\Theta(\ss,H)) \cong H^*(\Hess(\ss,H))^{W_\Theta({\rm star})}
$$
gives an isomorphism of $W$-modules as the dot action.
Therefore, we obtain the following isomorphism of graded $\Q$-algebras
\begin{align} \label{eq:regularproof2}
H^*(\Hess_\Theta(\ss,H))^{W_\Xi({\rm dot})} \cong \left(H^*(\Hess(\ss,H))^{W_\Theta({\rm star})} \right)^{W_\Xi({\rm dot})}.
\end{align}
By \eqref{eq:regularproof1} and \eqref{eq:regularproof2} we obtain the desired isomorphism.
\end{proof}

Let $\ss \in \g$ be a regular semisimple element and $\Xi \subset \Delta$.
By a similar argument of \cite[Proposition~10.6]{AHHM} and \cite[Proposition~8.13 and Theorem~12.1]{AHMMS}, we can prove that $H^*(\Hess_\Theta(\ss,H))^{W_\Xi({\rm dot})}$ satisfies a Poincar\'e duality algebra, the hard Lefschetz property, and the Hodge--Riemann relations, under the assumption that the top degree of $H^*(\Hess_\Theta(\yy,H))^{W_\Xi({\rm dot})}$ is $1$-dimensional vector space. 
Therefore, we conclude the following result from Theorem~\ref{theorem:main2}.

\begin{corollary} \label{coro:regular}
Let $\yy \in \g$ be a regular element and $H \subset \g$ a $\p$-Hessenberg space. 
Let $d = \dim_\C \Hess_\Theta(\yy,H) = \dim H/\p$.
Assume that the regular partial Hessenberg variety $\Hess_\Theta(\yy,H)$ is irreducible. 
We fix an isomorphism $\int: H^{2d}(\Hess_\Theta(\yy,H)) \cong \Q$.
Then the followings hold.
\begin{enumerate}
\item[(1)] The cohomology ring $H^*(\Hess_\Theta(\yy,H))$ is a Poincar\'e duality algebra defined as follows. 

\bigskip
\noindent
\textbf{Poincar\'e duality algebra:} 
The map
\begin{align*}
H^{2k}(\Hess_\Theta(\yy,H)) \times H^{2(d-k)}(\Hess_\Theta(\yy,H)) \rightarrow \Q; \ (\alpha,\beta) \mapsto \int \alpha \cup \beta 
\end{align*}
is non-degenerate for all $0 \leq k \leq \lfloor \frac{d}{2} \rfloor$. 

\item[(2)] If $d \geq 1$, then there exists a non-zero element $\omega \in H^2(\Hess_\Theta(\yy,H))$ such that $\omega$ satisfies the hard Lefschetz property and the Hodge--Riemann relations as follows.

\bigskip
\noindent
\textbf{Hard Lefschetz property:} The multiplication by $\omega^{d-2k}$ gives an isomorphism 
\begin{align*}
H^{2k}(\Hess_\Theta(\yy,H)) \rightarrow H^{2(d-k)}(\Hess_\Theta(\yy,H))
\end{align*}
for all $0 \leq k \leq \lfloor \frac{d}{2} \rfloor$. 

\bigskip
\noindent
\textbf{Hodge--Riemann relations:} For any $0 \leq k \leq \lfloor \frac{d}{2} \rfloor$, the symmetric bilinear form
\begin{align*}
H^{2k}(\Hess_\Theta(\yy,H)) \times H^{2k}(\Hess_\Theta(\yy,H)) \rightarrow \Q; \ (\alpha,\beta) \mapsto (-1)^k \int \omega^{d-2k} \cup \alpha \cup \beta 
\end{align*}
is positive-definite on the kernel of the multiplication map
\begin{align*}
\omega^{d-2k+1}: H^{2k}(\Hess_\Theta(\yy,H)) \rightarrow H^{2(d-k+1)}(\Hess_\Theta(\yy,H)). 
\end{align*}
\end{enumerate}
\end{corollary}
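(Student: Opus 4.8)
The plan is to combine the isomorphism of graded $\Q$-algebras $H^*(\Hess_\Theta(\yy,H))\cong H^*(\Hess_\Theta(\ss,H))^{W_\Xi({\rm dot})}$ of Theorem~\ref{theorem:main2} with the Kähler package for the smooth projective variety $\Hess_\Theta(\ss,H)$, and then to descend that package to the $W_\Xi$-dot-invariants, exactly as in \cite[Proposition~10.6]{AHHM} and \cite[Proposition~8.13, Theorem~12.1]{AHMMS}. By Lemma~\ref{lemma:partial regular semisimple property}, $\Hess_\Theta(\ss,H)$ is a smooth closed subvariety of $G/P$, equidimensional of complex dimension $d=\dim H/\p$, with vanishing odd cohomology; hence, over $\R$, classical Hodge theory gives, componentwise, the hard Lefschetz property and the Hodge--Riemann relations on $B:=H^*(\Hess_\Theta(\ss,H);\R)$ with respect to any ample class. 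I would take $\omega\in H^2(\Hess_\Theta(\ss,H);\Q)$ to be the pullback of an ample class on $G/P$: ampleness is inherited by closed subvarieties, so $\omega$ restricts to an ample class on each component and is a Lefschetz element for all of $B$; and since $G/P=\Hess_\Theta(\ss,\g)$ has the same $T$-fixed locus $W/W_\Theta$ as $\Hess_\Theta(\ss,H)$, the restriction map $H^*(G/P)\to H^*(\Hess_\Theta(\ss,H))$ is the obvious inclusion in GKM coordinates and is therefore equivariant for the dot action, which is trivial on $H^*(G/P)$ by Remark~\ref{remark:dot action trivial}; hence $\omega$ is fixed by the dot action of $W_\Xi$.

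Next I would run the descent to $A:=B^{W_\Xi({\rm dot})}$. Granting that $A^{2d}=(B^{2d})^{W_\Xi({\rm dot})}$ is $1$-dimensional and fixing $\int$ on it compatibly with the fundamental classes of the components of $\Hess_\Theta(\ss,H)$ (so that $\int\omega^d>0$), the argument is: (i) averaging a Poincaré-dual class over $W_\Xi$ shows the Poincaré pairing on $A$ is non-degenerate (odd degrees already vanish); (ii) the maps $\omega^{d-2k}\colon B^{2k}\to B^{2(d-k)}$ are $W_\Xi$-equivariant isomorphisms, hence restrict to isomorphisms on invariants, giving hard Lefschetz for $A$ with the class $\omega$; (iii) the primitive subspace of $A^{2k}$ equals the $W_\Xi$-invariants of the primitive subspace of $B^{2k}$, and the Hodge--Riemann form of $A$ is the restriction of that of $B$, so it stays positive definite. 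This produces Poincaré duality, hard Lefschetz, and the Hodge--Riemann relations for $A$ with a distinguished $\omega\in A^2$, which is nonzero when $d\geq1$ because then $\omega^d\neq 0$ generates $A^{2d}$.

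Finally I would transport everything across Theorem~\ref{theorem:main2}. The $1$-dimensionality of $A^{2d}$ used above holds precisely because $\Hess_\Theta(\yy,H)$ is assumed irreducible and is compact of complex dimension $d$, so $H^{2d}(\Hess_\Theta(\yy,H);\Q)\cong\Q$, and Theorem~\ref{theorem:main2} identifies this with $(B^{2d})^{W_\Xi({\rm dot})}$. Since Poincaré duality, hard Lefschetz, and the Hodge--Riemann relations are intrinsic to a graded algebra equipped with a socle generator and a degree-$2$ class, the algebra isomorphism carries the structure on $A$ to the asserted structure on $H^*(\Hess_\Theta(\yy,H))$, with $\omega$ going to a nonzero class of $H^2(\Hess_\Theta(\yy,H))$ and the two normalizations of $\int$ matching up to a positive scalar. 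I expect the main obstacle to be step~(iii): making precise that the Hodge--Riemann positivity on the possibly disconnected $B$ restricts correctly to $A$, i.e.\ that the chosen $\int$ on $A^{2d}$ is a positive combination of the componentwise fundamental-class integrals, which requires pinning down how the dot action of $W_\Xi$ acts on $H^{2d}(\Hess_\Theta(\ss,H))$ and on its set of connected components, as in \cite{AHMMS}.
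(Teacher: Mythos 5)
Your proposal is correct and follows essentially the same route as the paper: the paper likewise deduces the corollary by combining Theorem~\ref{theorem:main2} with the descent of the K\"ahler package from the smooth projective variety $\Hess_\Theta(\ss,H)$ to its $W_\Xi$-dot-invariants, citing the arguments of \cite[Proposition~10.6]{AHHM} and \cite[Proposition~8.13, Theorem~12.1]{AHMMS} under the hypothesis that the top-degree invariant part is one-dimensional. In fact you supply more detail (the choice of $\omega$ as a dot-invariant ample class pulled back from $G/P$, and the averaging/primitive-subspace steps) than the paper, which defers all of this to the cited references.
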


\section{Poincar\'e polynomial} \label{sect:Poincare polynomial}

In this section we give an explicit formula for the Poincar\'e polynomial of the regular semisimple Hessenberg variety associated with the double lollipop Hessenberg function in type $A$.

\subsection{Toric varieties in $G/P$}

Let $\t^*_{\R}=\t^*_{\Z} \otimes_{\Z} \R$. 
The weight polytope $\PP_\Phi(x)$ of a point $x \in \t^*_{\R}$ is the convex hull of the orbits of $x$ under the Weyl group action as follows:
\begin{align*}
\PP_\Phi(x) \coloneqq \conv\{w \cdot x \mid w \in W \}.
\end{align*}
Let $\varpi_1,\ldots,\varpi_n$ be the fundamental weights and we consider the cone 
\begin{align*}
C_\Theta =\cone(\varpi_i \mid \alpha_i \notin \Theta)
\end{align*}
for $\Theta \subset \Delta$.
Let $\lambda$ be an element in the relative interior of the cone $C_\Theta$.
Then the weight polytope $\PP_\Phi(\lambda)$ is the moment map image of $G/P$ (\cite[Section~8.3]{GelSer}) where $P$ denotes the parabolic subgroup associated with $\Theta$. 
Note that $\dim \PP_\Phi(\lambda) = \rank \Phi = n$. 
Let $X_\Theta$ be the toric variety associated to the weight polytope $\PP_\Phi(\lambda)$, i.e. the corresponding fan is the normal fan to $\PP_\Phi(\lambda)$. 
(Note that $X_\Delta$ is $X_\Phi$ in Remark~\ref{remark:invariant} by abuse of notation.)
By using the relation between $h$-vectors and $f$-vectors, the Poincar\'e polynomial of $X_\Theta$ equals 
\begin{align} \label{eq:Poincare_polynomial_X_Theta_1}
\Poin(X_\Theta,\sqrt{q})=\sum_{i=0}^n f_i (q-1)^i,
\end{align}
where $f_i$ denotes the number of $i$-dimensional faces of $\PP_\Phi(\lambda)$. 
The face structure of weight polytopes $\PP_\Phi(\lambda)$ is well understood as follows.
Let $\Gamma$ be the Dynkin diagram of $\Phi$.
We set 
\begin{align} \label{eq:STheta}
\SS(\Theta) = \{\Xi \subset \Delta \mid \textrm{no connected component of $\Gamma|_\Xi$ is contained in $\Gamma|_\Theta$} \},
\end{align}
where $\Gamma|_\Xi$ denotes the resriction of the Dynkin diagram $\Gamma$ to $\Xi$ in the sense of the induced graph. 
Here, we take the convention that the empty set is always an element of $\SS(\Theta)$. 

\begin{theorem} $($\cite[Corollary~1.3]{Ren}, cf. \cite[Theorem~2.4.3]{GaoMcD} and \cite[Theorem~6.5]{ACEP}$)$ \label{theorem:f-vectors}
For $\Theta \subset \Delta$, let $\lambda$ be an element in the relative interior of the cone $C_\Theta$, so that the isotropy group of $\lambda$ is $W_\Theta$. 
Let $\PP_\Phi(\lambda)$ be the weight polytope associated to $\lambda$.
Then the following holds.
\begin{enumerate}
\item[(1)] There is a bijection
\begin{align*} 
\SS(\Theta) \longleftrightarrow \{\textrm{$W$-orbits of faces of $\PP_\Phi(\lambda)$} \}, 
\end{align*}
where $\SS(\Theta)$ is defined in \eqref{eq:STheta}. 
Under this bijection, an element $\Xi \in \SS(\Theta)$ corresponds to the $W$-orbit of a $|\Xi|$-dimensional face $F_\Xi$ that is combinatorially equivalent to $\PP_{\Phi_\Xi}(\sum_{\alpha_i \in (\Delta \setminus \Theta) \cap \Xi} \varpi_i)$. 
\item[(2)] 
Let $F_\Xi$ denote the $|\Xi|$-dimensional face of $\PP_\Phi(\lambda)$ defined in the statement~$(1)$. 
Then the stabilizer of $F_\Xi$ is generated by the reflections with respect to roots in $\Xi^*$ defined as follows
\begin{align*}
\Xi^* \coloneqq \Xi \cup \{\alpha_i \in \Theta \mid s_i s_j =s_j s_i \ \textrm{for any} \ \alpha_j \in \Xi \}.
\end{align*}
\end{enumerate}
\end{theorem}

By Theorem~\ref{theorem:f-vectors}, the number of $i$-dimensional faces of $\PP_\Phi(\lambda)$ equals
\begin{align*} 
f_i = \sum_{\Xi \in \SS(\Theta) \atop |\Xi| = i} \frac{|W|}{|W_{\Xi^*}|}.
\end{align*}
This with \eqref{eq:Poincare_polynomial_X_Theta_1} yields that 
\begin{align} \label{eq:Poincare_polynomial_X_Theta_2}
\Poin(X_\Theta,\sqrt{q})= \sum_{\Xi \in \SS(\Theta)} \frac{|W|}{|W_{\Xi^*}|} (q-1)^{|\Xi|}.
\end{align}

\subsection{Regular semisimple Hessenberg varieties for double lollipop case in type $A$}

Recall that $X_\Theta$ is the toric variety associated to the weight polytope $\PP_\Phi(\lambda)$ where we take $\lambda$ as an element in the relative interior of the cone $C_\Theta$. 
If $\Theta$ is the empty set, then the toric variety $X_\Theta$ can be realized as the regular semisimple Hessenberg variety $\Hess(\ss,H)$ in the flag variety $G/B$ where $I(H) = \Delta$ in \eqref{eq:pHessenberg space} (\cite[Theorem~11]{dMPS}). 
In type $A$, this result is generalized to partial flag varieties (\cite[Lemma~4.1]{MasSat}). 
We first explain this result below. 
In what follows, we frequently write 
\begin{align*}
[n] \coloneqq \{1,2, \ldots, n \}
\end{align*}
for any positive integer $n$. 
For $1 \leq a < b \leq n-1$, we also denote the consecutive substring from $a$ to $b$ by 
\begin{align*}
[a,b] \coloneqq \{a,a+1, \ldots, b\}.
\end{align*}

Let $1 \leq a < b \leq n-1$. 
Consider the following partial flag variety in type $A_{n-1}$
\begin{align*}
\Flag_{[a,b]}(\C^n) \coloneqq \{ (V_a \subset V_{a+1} \subset \cdots \subset V_b \subset \C^n) \mid \dim_\C V_i = i \ \textrm{for all} \ a \leq i \leq b \}.
\end{align*}
Let $S$ be a regular semisimple matrix (i.e. $S$ has distinct eigenvalues). 
We define 
\begin{align*}
X_{[a,b]} \coloneqq \{ (V_a \subset V_{a+1} \subset \cdots \subset V_b \subset \C^n) \in \Flag_{[a,b]}(\C^n) \mid SV_i \subset V_{i+1} \ \textrm{for all} \ a \leq i < b \}. 
\end{align*}
By \cite[Lemma~4.1]{MasSat}, $X_{[a,b]}$ is the toric variety associated to the weight polytope $\PP_\Phi(\lambda)$ where $\lambda$ is an element in the relative interior of the cone $C_\Theta$ and $\Theta = \Delta \setminus \{\alpha_i \mid a \leq i \leq b \}$. 
As usual, we number the vertices of the Dynkin diagram of type $A_{n-1}$ by $[n-1] = \{1,2, \ldots, n-1 \}$ and regard the vertices as the simple roots $\alpha_1, \ldots , \alpha_{n-1}$. 
We paint the vertices in $\Theta$ black as shown in Figure~\ref{pic: Dynkin diagram}. 

\begin{figure}[h]
\setlength{\unitlength}{1mm}
\begin{center} 
  \begin{picture}(120,15)(0,0)
  \put(0,10){\circle*{2}}
  \put(1,10){\line(1,0){8}}
  \put(10,10){\circle*{2}}
  \put(11,10){\line(1,0){6}}
  \put(20,8.8){$\cdots$}
  \put(28,10){\line(1,0){6}}
  \put(35,10){\circle*{2}}
  \put(36,10){\line(1,0){8}}
  \put(45,10){\circle{2}}
  \put(46,10){\line(1,0){8}}
  \put(55,10){\circle{2}}
  \put(56,10){\line(1,0){6}}
  \put(65,8.8){$\cdots$}
  \put(73,10){\line(1,0){6}}
  \put(80,10){\circle{2}}
  \put(81,10){\line(1,0){8}}
  \put(90,10){\circle*{2}}
  \put(91,10){\line(1,0){6}}
  \put(100,8.8){$\cdots$}
  \put(108,10){\line(1,0){6}}
  \put(115,10){\circle*{2}}
  \put(-1,3){$1$}
  \put(9,3){$2$}
  \put(30,3){$a-1$}
  \put(44,3){$a$}
  \put(51,3){$a+1$}
  \put(79,3){$b$}
  \put(86,3){$b+1$}
  \put(110,3){$n-1$}
  \end{picture}
\end{center}  
\caption{The Dynkin diagram of type $A_{n-1}$ with vertices $\bullet$ in $\Theta = \Delta \setminus \{\alpha_i \mid a \leq i \leq b \}$.}
\label{pic: Dynkin diagram}
\end{figure}

By setting
\begin{align*}
\SS_{[a,b]} &= \{J \subset [n-1] \mid \textrm{no connected component of $J$ is contained in $[n-1] \setminus [a,b]$} \}; \\
J^* &= J \sqcup \{i \in [n-1] \setminus [a,b] \mid |i-j| \geq 2 \ \textrm{for any} \ j \in J \};
\end{align*}
we obtain from \eqref{eq:Poincare_polynomial_X_Theta_2} that 
\begin{align} \label{eq:Poincare_polynomial_X_ab}
\Poin(X_{[a,b]},\sqrt{q})= \sum_{J \in \SS_{[a,b]}} \frac{n!}{|\mathfrak{S}_{J^*}|} (q-1)^{|J|},
\end{align}
where $\mathfrak{S}_{J^*}$ is a subgroup of the symmetric group $\mathfrak{S}_n$ generated by the adjacent transposition $s_i \ (i \in J^*)$ swapping $i$ and $i+1$.

We now define the regular semisimple Hessenberg variety of double lollipop type.
Recall that the flag variety in type $A_{n-1}$ is defined by
\begin{align*}
\Flag(\C^n) =\{ (V_1 \subset V_2 \subset \cdots \subset V_n = \C^n) \mid \dim_\C V_i = i \ \textrm{for all} \ 1 \leq i \leq n \}.
\end{align*}
A function $h:[n] \rightarrow [n]$ is called a \emph{Hessenberg function} if $h$ is weakly increasing with the condition that $h(i) \geq i$ for any $i \in [n]$.
For a Hessenberg function $h$, the regular semisimple Hessenberg variety in type $A_{n-1}$ is defined by 
\begin{align*}
\Hess(S,h) = \{(V_1 \subset V_2 \subset \cdots \subset V_n = \C^n) \in \Flag(\C^n) \mid SV_i \subset V_{h(i)} \ \textrm{for any} \ i \in [n] \}. 
\end{align*}
A Hessenberg function $h$ is called of \emph{double lollipop type for} $[a,b]$ if $h$ is of the form 
\begin{align*}
h(j) = 
\begin{cases}
a+1 & (1 \leq j \leq a); \\
j+1 & (a < j < b); \\
n & (b \leq j \leq n).
\end{cases}
\end{align*}

\begin{proposition} \label{proposition: Poincare_polynomial_double_lollipop}
Let $1 \leq a < b \leq n-1$ and $h$ the Hessenberg function of double lollipop type for $[a,b]$.
Then, the Poincar\'e polynomial of the regular semisimple Hessenberg variety $\Hess(S,h)$ is given by
\begin{align*} 
\Poin(\Hess(S,h),\sqrt{q})=[a]_q! [n-b]_q! \left( \sum_{J \in \SS_{[a,b]}} \frac{n!}{|\mathfrak{S}_{J^*}|} (q-1)^{|J|} \right),
\end{align*}
where we denote $[n]_q = 1+q+q^2+\cdots+q^{n-1}$ and $[n]_q! = \prod_{i=1}^n [i]_q$. 
\end{proposition}

\begin{proof}
By Lemma~\ref{lemma:fiber bundle}, the natural projection $\Hess(S,h) \rightarrow X_{[a,b]}$ is a fiber bundle with fiber isomorphic to $\Flag(\C^a) \times \Flag(\C^{n-b})$ (see also \cite{KiLe} and \cite{MasSat}).
It then follows from Theorem~\ref{theorem:main} (1) that 
\begin{align*} 
\Poin(\Hess(S,h),\sqrt{q})=\Poin(\Flag(\C^a),\sqrt{q}) \Poin(\Flag(\C^{n-b}),\sqrt{q}) \Poin(X_{[a,b]},\sqrt{q}).
\end{align*}
Since $\Poin(\Flag(\C^n),\sqrt{q}) = [n]_q!$, the result follows from \eqref{eq:Poincare_polynomial_X_ab}.
\end{proof}

\begin{example}
Let $n=5$ and we take $a=2$ and $b=3$.
Then we have 
\begin{align*} 
\SS_{[a,b]} = \{\emptyset, \{2\}, \{3\}, \{1,2\}, \{2,3\}, \{3,4\}, \{1,2,3\}, \{2,3,4\}, \{1,2,3,4\} \}.
\end{align*}
By Proposition~\ref{proposition: Poincare_polynomial_double_lollipop} we obtain 
\begin{align*} 
\Poin(\Hess(S,h),\sqrt{q})= &(1+q)^2 \big(\frac{5!}{2!2!} + 2\frac{5!}{2!2!}(q-1) + 2\frac{5!}{3!2!}(q-1)^2 + \frac{5!}{3!}(q-1)^2 \\
& \hspace{60pt} + 2\frac{5!}{4!}(q-1)^3 + \frac{5!}{5!}(q-1)^4 \big) \\
=&(1+q)^2 \big(30+60(q-1) + 40(q-1)^2 + 10(q-1)^3 + (q-1)^4 \big) \\ 
=&(1+q)^2 \big(q^4+6q^3+16q^2+6q+1 \big). 
\end{align*}
\end{example}

\smallskip


\begin{thebibliography}{99}
\bibitem{AFZ}
H. Abe, N. Fujita, and H. Zeng, 
\emph{Geometry of regular Hessenberg varieties},
Transform. Groups \textbf{25} (2020), no. 2, 305--333.

\bibitem{AHHM}
H. Abe, M. Harada, T. Horiguchi, and M. Masuda,
\emph{The cohomology rings of regular nilpotent Hessenberg varieties in Lie type A}, 
Int. Math. Res. Not. IMRN \textbf{2019} (2019), 5316--5388. 

\bibitem{AHMMS}
T. Abe, T. Horiguchi, M. Masuda, S. Murai, and T. Sato,
\emph{Hessenberg varieties and hyperplane arrangements},
J. Reine Angew. Math. \textbf{764} (2020), 241--286.

\bibitem{ACEP}
F. Ardila, F. Castillo, C. Eur, and A. Postnikov, 
\emph{Coxeter submodular functions and deformations of Coxeter permutahedra}, 
Adv. Math. \textbf{365} (2020), 36 p.

\bibitem{BaCr24}
A. B$\breve{a}$libanu and P. Crooks, 
\emph{Perverse sheaves and the cohomology of regular Hessenberg varieties},
Transform. Groups \textbf{29} (2024), no. 3, 909--933.

\bibitem{BGG}
I. N. Bernstein, I. M. Gelfand, and S. I. Gelfand, 
\emph{Schubert cells, and the cohomology of the spaces $G/P$}, 
Uspehi Mat. Nauk, \textbf{28} , no. 3(1973), 3--26.

\bibitem{Bor53}
A. Borel, 
\emph{Sur la cohomologie des espaces fibr\'es principaux et des espaces homog\'enes de groupes de Lie compacts}, 
Ann. of Math. (2) \textbf{57} (1953), 115--207.

\bibitem{Bre72}
G. E. Bredon, 
\emph{Introduction to compact transformation groups},
Pure Appl. Math., Vol. 46
Academic Press, New York-London, 1972. 

\bibitem{BroCho}
P. Brosnan and T. Y. Chow, 
\emph{Unit interval orders and the dot action on the cohomology of regular semisimple Hessenberg varieties}, 
Adv. Math. \textbf{329} (2018), 955--1001.

\bibitem{dMPS}
F. De Mari, C. Procesi, and M. A. Shayman,
\emph{Hessenberg varieties}, 
Trans. Amer. Math. Soc. {\bf 332} (1992), no. 2, 529--534. 

\bibitem{EHNT}
M. Enokizono, T. Horiguchi, T. Nagaoka, and A. Tsuchiya,
\emph{Uniform bases for ideal arrangements}, 
arXiv:1912.02448.

\bibitem{FHM}
Y. Fukukawa, M. Harada, and M. Masuda, 
\emph{The equivariant cohomology rings of Peterson varieties}, 
J. Math. Soc. Japan \textbf{67} (2015), no. 3, 1147--1159. 

\bibitem{GaoMcD}
J. Gao and V. McDonald, 
\emph{Generating functions for f-vectors and the cd-index of weight polytopes}, 
http://www-users.math.umn.edu/~reiner/REU/GaoMcDonald2018.pdf, 2018.

\bibitem{GelSer}
I. M. Gelfand and V. Serganova, 
\emph{Combinatorial geometries and torus strata on compact homogeneous spaces}, 
Usp. Mat. Nauk \textbf{42} (1987) 107--134.

\bibitem{GHZ}
V. Guillemin, T. Holm, and C. Zara, 
\emph{A GKM description of the equivariant cohomology ring of a homogeneous space}, 
J. Algebr. Comb. \textbf{23} (2006), 21--41.  

\bibitem{GolSin}
R. Goldin and R. Singh,
\emph{Equivariant Chevalley, Giambelli, and Monk Formulae for the Peterson Variety},
arXiv:2111.15663.

\bibitem{GKM}
M. Goresky, R. Kottwitz and R. MacPherson, 
\emph{Equivariant cohomology, Koszul duality, and the localization theorem}, 
Invent. Math. \textbf{131} (1998), no. 1, 25--83.

\bibitem{HHM}
M. Harada, T. Horiguchi, and M. Masuda, \emph{The equivariant cohomology rings of Peterson varieties in all Lie types}, 
Canad. Math. Bull. \textbf{58} (2015), no. 1, 80--90. 

\bibitem{Hik}
T. Hikita,
\emph{A proof of the Stanley-Stembridge conjecture},
arXiv:2410.12758.

\bibitem{Hor24}
T. Horiguchi, 
\emph{Regular nilpotent partial Hessenberg varieties},
to appear in Doc. Math., 
DOI 10.4171/DM/1038.

\bibitem{KiLe}
Y. H. Kiem and D. Lee,
\emph{Birational geometry of generalized Hessenberg varieties and the generalized Shareshian-Wachs conjecture}, 
J. Combin. Theory Ser. A \textbf{206} (2024), Paper No. 105884.

\bibitem{Kly}
A. Klyachko, 
\emph{Orbits of a maximal torus on a flag space},
Functional Anal. Appl. \textbf{19}, no. 2 (1985), 77--78.

\bibitem{MasSat}
M. Masuda and T. Sato,
\emph{Regular semisimple Hessenberg varieties with cohomology rings generated in degree two},
arXiv:2301.03762.

\bibitem{Pre18}
M. Precup,
\emph{The Betti numbers of regular Hessenberg varieties are palindromic}, 
Transform. Groups 23 (2018), no. 2, 491--499. 

\bibitem{Ren}
L.E. Renner, 
\emph{Descent systems for Bruhat posets}, 
J. Algebraic Comb. \textbf{29} (4) (2009), 413--435.

\bibitem{Tym08}
J. Tymoczko, 
\emph{Permutation actions on equivariant cohomology of flag varieties},
Toric topology, Contemp. Math., \textbf{460} American Mathematical Society, Providence, RI, (2008), 365--384.
\end{thebibliography}
\end{document}